\newcommand{\tmop}[1]{\ensuremath{\operatorname{#1}}}
\renewcommand{\Re}{\tmop{Re}}
\newcommand{\D}{\mathbb D}
\newcommand{\C}{\mathbb C}
\newcommand{\R}{\mathbb R}
\newtheorem{theorem}{Theorem}[section]
\newtheorem{lemma}[theorem]{Lemma}
\newtheorem{prop}[theorem]{Proposition}
\newtheorem{cor}[theorem]{Corollary}
\newtheorem{defn}[theorem]{Definition}
\newcommand{\Z}{\mathbb{Z}}
\begin{document}

\title {Applications of singular connections in symplectic and almost complex geometry}

\author{ Emmanuel Mazzilli*  and Alexandre Sukhov**}

\maketitle

\begin{abstract}  In this paper, we give two direct applications of the theory of singular connections developped by Harvey-Lawson \cite{HL1}. The first one is a version of Lelong-Poincar\' e formula for vector bundle over an almost complex manifold. The second is a convergence theorem for divisors associated to symplectic submanifolds constructed by Auroux in \cite{AU}. The case of hypersurfaces was done by Donaldsson in \cite{D}
\end{abstract}

\let\thefootnote\relax\footnote{MSC: 32E20,32E30,32V40,53D12.
Key words: almost complex manifold, complex vector bundle, current, divisor}

*Universit\'e des Sciences et Technologies de Lille, 
U.F.R. de Math\'ematiques, 59655 Villeneuve d'Ascq, Cedex, France.
The author is partially supported by Labex CEMPI.

e-mail: Emmanuel.Mazzilli@math.univ-lille1.fr

**Universit\'e des Sciences et Technologies de Lille, 
U.F.R. de Math\'ematiques, 59655 Villeneuve d'Ascq, Cedex, France.
The author is partially supported by Labex CEMPI.

e-mail: sukhov@math.univ-lille1.fr

\section{Introduction}
The classical Lelong - Poincar\'e formula gives an equation between the  connection form of a holomorphic line bundle, the curvature and the current of integration over the zero locus of a holomorphic section. This formula plays a fundamental role in the complex geometry. The goal of the present paper is to extend this equation for currents to the case of complex vector bundles over symplectic almost complex manifolds. If an almost complex structure is not integrable, non-constant  holomorphic sections usually do not exist. For this reason we mainly work with almost holomorphic sections constructed by Donaldson in  his fundamental paper \cite{D} for the case of line bundles; his construction was extended to the case of arbitrary bundles by Auroux \cite{AU}.
Zero sets of such sections are symplectic hypersurfaces; this fact  explains the importance of Donaldson's construction for symplectic geometry. Another source of our inspiration is the important work of Harvey-Lawson \cite{HL1} where the Lelong-Poincar\'e formula is obtained in a very general situation of smooth sections of complex (not necessarily holomorphic) vector bundles.

In the present paper we adapt the approach of Harvey-Lawson to the special case of almost holomorphic sections. This leads to the equations for currents which generalize usual Lelong-Poincar\'e formula to the almost complex case. Note that the supplementary (with respect to the classical case) terms appearing in the equations can be controlled by suitable upper estimates. As an   application, in the last section we extend Donaldson's result on convergence of currents of integration over zero sets of almost holomorphic sections of line bundles to the case of sections of arbitrary bundles. We note that the first step toward the Lelong-Poincar\'e formula in the almost complex setting was done in the interesting paper by Elkhadra \cite{EL}. He considered the special case of trivial bundle with flat connection over an almost complex manifold and  the  currents of integration over (almost) complex submanifolds.

\tableofcontents

\section{Preliminaries}

For convenience of readers we recall some basic  facts about currents  and the geometry of complex vector bundles following \cite{DE,F, HL1, HS}. We also set up notations and terminology used through this paper.

\subsection{Currents}
Let $M$ be a smooth real manifold of dimension $m$. All manifolds considered in this subsection are oriented. Denote by $A^p(M)_s$ the space of  differential $p$-forms of class ${C}^s$ on $M$. We write $A^p(M)$ for $C^\infty$ forms. If $U$ is a chart with coordinates $x = (x_1,...,x_m)$, then  a $p$-form $\phi$ has a representation 
$\phi(x) = \sum_{\vert I \vert} \phi_I dx_I$ with the standard multi-index notation $I = (i_1,...,i_p)$, $i_1 < ...< i_p$ and $dx_I = dx_{i_1} \wedge ... \wedge dx_{i_p}$. Here $\vert I \vert = p$ (the length). For $\alpha = (\alpha_1,...,\alpha_m)$ consider the derivation operator  $D^\alpha = \partial^{\vert \alpha \vert}/\partial x_1^{\alpha_1}...\partial x_m^{\alpha_m}$ of order $\vert\alpha \vert = \alpha_1 + ...+ \alpha_m$. Given compact subset $K$ in $U$ and a positive integer $s$ define the standard seminorm

\begin{eqnarray}
\label{seminorm1}
p_K^s(\phi) = \sup_{x \in K} \max \{ \vert D^\alpha \phi_I(x) \vert  : \vert I \vert = p, \vert\alpha\vert \le s \} 
\end{eqnarray}
When $s$, $K$ and $U$ vary, the family of seminorms $p_K^s$ define the natural topology on $A^p(M)$. If $s$ is fixed, we obtain the topology on $A^p(M)_s$. If $K$ is a fixed compact subset of $M$, then ${\mathcal D}^p(K)$ denotes the subspace of $A^p(M)$ consisting of forms with support containing in $K$. The space of test forms on $M$ is defined by ${\mathcal D}^p(M) = \cup_K {\mathcal D}^p(K)$. Similarly can be defined the spaces of s-smooth p-forms ${\mathcal D}^p(K)_s$ and ${\mathcal D}^p(K)_s$. Then $A^p(M)$ (resp. $A^p(M)_s$) is a Fr\'echet space (resp. a Banach space). A current $T$ of dimension $p$ is a linear form on ${\mathcal D}^p(M)$ such that its restriction on every ${\mathcal D}^p(K)$ is continuous; the space of currents is denoted by ${\mathcal D}_p'(M)$. The same space is also denoted by ${{\mathcal D}'}^{m-p}(M)$ where $m-p$ is called  the degree of current. Similarly the topological dual of ${\mathcal D}^p(M)_s$ is called the space of currents of order (of singularity) $s$. This space can be identified with  the space of currents which are continuous on ${\mathcal D}^p(K)$ with respect to the seminorm $p^s_K$ for all compact subsets $K$.

Denote by $\langle T, \phi \rangle$ the action of a current $T$ on a test form $\phi$. The support $Supp T$ of $T$ is defined as in the usual distribution theory. Recall two basic examples playing the crucial role in the present work.

{\bf Example 1.} Let $Z$ be a closed oriented submanifold (may be with boundary) in $M$ of dimension $p$. The {\it current of integration} $[Z]$ over $Z$ 
is defined by 
$$\langle [Z],\phi \rangle = \int_Z \phi, \,\,\, \phi \in {\mathcal D}^p(M)$$
This is a current of order $0$ and of dimension $p$; its support coincides with $Z$.

{\bf Example 2.} A differential form $u$ of degree $q$ on $M$ with coefficients of class $L^1_{loc}(M)$ defines the current $T_u$ of dimension $m-q$:
$$\langle T_u, \phi \rangle = \int_M u \wedge \phi, \,\,\, \phi \in {\mathcal D}^{m-q}(M)$$
This current is of degree $q$ and of order $0$.  The correspondence $u \mapsto T_u$ is injective.

This last example is general. If $(x_1,...,x_m)$ are local coordiantes, every current  $T \in {{\mathcal D}'}^q(M)$ of order $s$ can be written in a unique way as a differential $q$-form
$$T = \sum_{\vert I \vert = q} T_I dx_I$$
whose coefficients are distributions of order $s$. The action of $T$ on $\phi \in {\mathcal D}^{m-q}(M)$ can be written in the form
$$ \langle T, \phi \rangle = \int_M T \wedge \phi$$
Here by convention the product of a coefficient $T_I$ of the current $T$ and a coefficient $\phi_J$ of the form $\phi$ is distribution's action $\langle T_I,\phi_J\rangle$.
The extrerior derivative  can be defined for a current $T$ of degree $q$  by duality putting 
$$\langle dT, \phi \rangle = (-1)^{q+1} \langle T, d \phi \rangle$$
For a current  $T \in {{\mathcal D}'}^q(M)$ and a smooth form $u \in A^r(M)$ the wedge-product is defined by 
$$\langle T \wedge u, \phi \rangle = \langle T, u \wedge \phi \rangle, \, \, \, \phi \in {{\mathcal D}'}^{m-q-r}(M)$$

Let $M_1$ and $M_2$ be oriented manifolds of dimensions $m_1$ and $m_2$ respectively. Consider a smooth map $f:M_1 \longrightarrow M_2$. 
Let $T \in {{\mathcal D}'}^{q}(M_1)$ be a current such that the restriction  $f \vert_{Supp T}$ is a proper map. Then the direct image $f_*T$ is defined by $\langle f_*T,\phi \rangle = \langle T, f^*\phi \rangle $ for $\phi \in {{\mathcal D}'}^{m_1 - q}(M_2)$. Note that $d(f_*T) = f_*(dT)$ and $f_*(T \wedge f^*u) = (f_*T) \wedge u$ for a smooth form $u$ on $M_2$. 

Suppose additionly that $f$ is a submersion. Let $u$ be a $q$-form  with coefficients of class $L^1_{loc}$ on $M_1$. Identifying as above the form $u$ and the current $T_u$ we can consider the direct image  $f_*u \in {{\mathcal D}'}^{q-(m_1 - m_2)}(M_2)$ which is of order singularity $0$. It follows by Fubini's theorem that 
\begin{eqnarray}
\label{Fubini}
f_*u(x) = \int_{t \in f^{-1}(x)} u(t)
\end{eqnarray}
This observation allows to define the inverse image $f^*T \in {{\mathcal D}'}^q(M_1)$ of a current $T \in {{\mathcal D}'}^q(M_2)$ as following:
$$\langle f^*T, \phi \rangle = \langle T, f_*\phi \rangle$$
Note that an important special case arises if $T = [Z]$. Then 
\begin{eqnarray}
\label{Zero_Back}
f^*[Z] = [f^{-1}(Z)]
\end{eqnarray}

\subsection{Zero divisors of atomic functions and sections} In this subsection we recall some facts from the theory developed in \cite{HL1,HS}. Let $M$ be a smooth oriented manifold of dimension $m$. Consider a current $T$ on $M$ viewed as a differential form with distributional coefficients.  We say that $T$ is of class $L^1_{loc}$ or is {\it  locally integrable} (on $M$) if every its coefficient is a function of class $L^1_{loc}$.
\begin{defn}
A smooth function $u:M \to \R^n$, $n > 1$, is called atomic if for each differential form $\alpha := dy^I/\vert y \vert^p$ on $\R^n$, with $p = \vert I \vert \le n-1$, the pullback $u^*(\alpha)$ admits an $L^1_{loc}$ extension acroos the zero set $Z_u:= u^{-1}(0)$. Furthermore, assume that $u$ does not vanish identically on any connected component of $M$. 
\end{defn}

If $u$ is atomic, the Lebesgue measure of $Z_u$ is equal to zero; in particular, the $L^1_{loc}$ extension is unique. We describe now geometric conditions which insure that a function is atomic.
They are local so we identify $M$ with an open subset of $\R^m$ with the standard coordinates $x$.
Suppose that $u: M \to \R^n$ is a function of class $C^{\infty}$ and satisfies the following conditions:
\begin{itemize}
\item[(i)] Algebraic vanishing. For each compact set $K \subset M$ there exist constants $c > 0$ (small) and $N$ (large) such that 
\begin{eqnarray}
\label{Loj}
\vert u(x) \vert \ge c \, dist(x,Z_u)^N, 
\end{eqnarray}
for all $x \in K$.
\item{(ii)} Strong codimension greater than $n-1$. The zero set $Z_u$ locally has the Minkowski codimension strictly greater than $n-1$ that is, for each compact set $K \subset M$, there exists an $\varepsilon > 0$ such that the upper Minkowski content of $Z \cap K$ in dimension $m-n+1 -\varepsilon$ is finite.
\end{itemize}
Then $u$ is  atomic.
In particular, if $u$ is real analytic and every irreducible component of $Z_u$ has codimension $\ge n$, then $u$ is atomic by Lojasiewicz inequalities.
Recall also that the upper Minkowski content $\bar M^k(X)$ of a (measurable) subset $X \subset \R^m$ in dimension $k$ is 
defined as following.  Given $\varepsilon > 0$, the $\varepsilon$-parallel set $X_\varepsilon$ is 
$X_\varepsilon = \{ x \in \R^m : \inf_{y \in X} \vert x - y \vert \le \varepsilon \}$. Then 
$\bar M^k(X) = \lim\sup_{\varepsilon \to 0} \varepsilon^{k-m}\lambda(X_\varepsilon)$ where $\lambda$ denotes the Lebesgue measure.

The divisor or zero current of an atomic function $u$ is defined as follows. Consider the solid angle potential on $\R^n$:
\begin{eqnarray}
\label{potential}
\theta = \sum_{j=1}^n (-1)^{j-1} \frac{y_jdy_1 \wedge ... \wedge \hat{dy_j} \wedge ... \wedge dy_n}{\vert y \vert^n}
\end{eqnarray}

If $u$ is an atomic function, the pullback $u^*(\theta)$ extends across $Z_u$ as an $L^1_{loc}$ current on $M$. Denote by $\sigma_n$ the volume of the unit sphere in $\R^n$. Then {\it the divisor} or {\it the zero current} $Div (u)$ of $u$ is defined by

\begin{eqnarray}
\label{divisor1}
Div(u) = d  \sigma_n^{-1} u^*(\theta) 
\end{eqnarray}
Thus, the divisor is a current of degree $n$. Note that $d  \sigma_n^{-1} \theta = [0]$, the current of integration over the origin $0$ in $\R^n$. Therefore one can consider $Div(u)$ as the pullback of $[0]$ by $u$, that is $u^*([0]) = Div(u)$.

Let $E$ be a smooth, oriented, real vector bundle of rank $n$ over a smooth oriented manifold $M$.
A section $s$ of $E$ is {\it atomic} if for each choice of local coordinates and frames, the corresponding $\R^n$-valued function is atomic. A point $x \in Z$ is called regular if $Z$ is a codimension $n$ Lipschitz submanifold near $x$; denote by $Z$ the zero set of $s$. Denote by  $Reg(Z)$ the set of regular points of $Z$ and by $Sing(Z) = Z \setminus Reg(Z)$ the set of singular points.
Let also $(Z_j)$ denotes the family of connected components of $Reg(Z)$. 

Suppose now that $s$ is an atomic section. Then for some integers $n_j$, 
\begin{eqnarray}
\label{divisor2}
Div(s) = \sum_j n_j [Z_j]
\end{eqnarray}
on $M \setminus Sing(Z)$.

Recall that a current of the form $\alpha + d\beta$, with $\alpha$, $\beta$ from $L^1_{loc}$ is called (locally) Federer {\it flat}. Clearly, $s^*(\theta)$ and $Div(\theta)$ are Federer flat currents.

Suppose additionly that the codimension-n Hausdorff  measure of $Sing(Z)$ vanishes and the 
current defined on $M \setminus Sing(Z)$ by the right hand of (\ref{divisor2}), has locally finite mass in $M$. Then (\ref{divisor2}) holds on $M$. In particular, this is the case if $s$ is a real analytic section and every irreducible component of $Z$ has codimension $\ge n$.

Recall that a current of the form $R + dS$ with $R$ and $S$ rectifiable currents, are called {\it integrally flat}.  Let $s$ be an  atomic section. Then the divisor $Div(s)$ is a $d$-closed, locally integrally flat current. Moreover, if $Div(s)$ has a locally finite mass, it is a locally rectifiable current.

 A current $T$ is called {\it normal} if $T$ and $dT$ are of order $0$.

\subsection{Geometry of vector bundles} Let $E$ be a smooth complex vector bundle of rank $n$ over a manifold $M$. We use the notation $A^p(M)$ for the space of (scalar) smooth differential $p$-forms on $M$ and $A^p(M,E)$ for the space of smooth $E$-valued p-forms.

 A {\it connection}  on $E$ is a $\C$-linear differential operator $D:A^0(M,E) \to A^1(M,E)$  satisfying the Leibnitz rule
\begin{eqnarray}
\label{connection1}
D (f s) = s df  + f  Ds
\end{eqnarray}
for every scalar forms $ \in A^0(M)$ and $s \in A^0(M,E)$.  Let $(e_1,...,e_n)$ be the frame corresponding to a local trivialization $U$ of $E$.
That is $e_i \in A^0(U,E)$ and $(e_1(x),...,e_n(x))$ is a basis of the fibre $E_x$ for every $x \in U$. We have
\begin{eqnarray}
\label{connection2}
D e_i = \sum  \omega_i^j e_j
\end{eqnarray}
where $\omega_i^j \in A^1(U,E)$ are scalar 1-forms. Viewing $e = (e_1,...,e_n)$ as a column we have in the matrix notation
\begin{eqnarray}
\label{connection3}
D e =  \omega e
\end{eqnarray}
The matrix $\omega = (\omega_i^j)$ with coefficients 1-forms is called  {\it the connection form} of $D$ (or {\it the gauge potential} of $D$).

 Then for every section $s \in A^0(U,E)$ we have the representation  $s = \sum s^j  e_j$ where $s_j$ are smooth scalar functions on $U$. Considering $s = {}^t(s_1,...,s_n)$ as a row vector, we obtain from (\ref{connection1}) and (\ref{connection2})  that $D$ acts on $s$ by
\begin{eqnarray}
\label{connection4}
Ds = d s +  s\omega
\end{eqnarray}
Let $\tilde{e} = (\tilde{e}_1,...,\tilde{e}_n)$ be another local frame over $U$. Then $e = \tilde{e} a$ where $a: U \longrightarrow GL(r;\C)$ is a matrix-valued function. Let $\tilde\omega$ be the connection form of $D$ with respect to $\tilde{e}$. Then $\tilde \omega$ follows the gauge transformation law:
$$\omega = a^{-1} \tilde \omega a + a^{-1} da.$$
In particular, the connection form is independent on the choice of trivialization and is defined globally on $E$.

 We extend the connection $D$ to a $\C$-linear map 
\begin{eqnarray}
\label{connection7}
D:A^p(M,E) \longrightarrow A^{p+1}(M,E)
\end{eqnarray}
setting
\begin{eqnarray}
\label{connection8}
D( f s) = (D s) \wedge f + s df
\end{eqnarray}
for any forms $s \in A^0(M,E)$ and $f \in A^p(M)$.

{\it The curvature tensor}  $\Omega = \Omega(D)$ is a global 2-form defined by the equation
\begin{eqnarray}
\label{connection5}
D^2s =\Omega s
\end{eqnarray}
In a local trivialization it can be expressed by
$$\Omega = d\omega + \omega \wedge \omega$$  
Note that in the case of a line bundle the second term in the right hand vanishes. If $\tilde{\Omega}$ is the curvature tensor in the trivialization $\tilde{e}$, then 
\begin{eqnarray}
\label{connection6}
\Omega = a^{-1} \tilde{\Omega} a
\end{eqnarray}

From now and on we assume that $(M,E)$ is a complex vector bundle over a complex manifold. Denote by $A^{p,q}(M)$ (resp. $A^{p,q}(M,E)$)  the spaces of scalar (resp. $E$-valued) smooth $(p,q)$-forms on $M$. Then the exterior derivative operator splits $d = d' + d''$ where $d':A^{p,q} \longrightarrow A^{p+1,q}$ and $d'': A^{p,q} \longrightarrow A^{p,q+1}$. This induces the splitting
$D = D' + D''$ where $D':A^{p,q}(M,E) \longrightarrow A^{p+1,q}(M,E)$ and $D'':A^{p,q}(M,E) \longrightarrow A^{p,q+1}(M,E)$. These operators satisfy 

$$D'( f s) = (D' s) \wedge f + s d'f$$
and 
$$D''( f s) = (D'' s) \wedge f + s d''f$$
for any forms $s \in A^0(M,E)$ and $f \in A^p(M)$. This is easy to check that the (0,2) component $\Omega^{0,2}$ of the curvature coincides with $D'' \circ D''$. We have the following fundamental criterion: $(M,E)$ admits a (unique) holomorphic vector bundle structure with the property $D'' = d''$ if and only if  $D'' \circ D'' = 0$. The unique connection on a holomorphic vector bundle with the property $D'' = d''$ is called the Chern connection.

A complex vector bundle $E$ is said to be hermitian if it is equipped with a  hermitian metric $h$ on every fibre smoothly depending on a fibre.
A connection $D$ is called { \it hermitian} if for any orthonormal frame  $i\omega$ is a hermitian 1-form that is $\omega^* = -\omega$. Here the star denotes the hermitian conjugation operator naturally extended to the space of matrices with 1-forms as entries. In particular, if 
 $E$ is a complex hermitian line bundle, then  a connection $D$ is hermitian if its  connection form associated to any orthonormal frame is a 1-form with purely imaginary values. In this case $i\omega$ and $i\Omega$ are real forms. The 2-form  $(i/2\pi)\Omega$ is real valued and closed hence represents a class in $H^2_{DR}(M,\R)$. This class is independent of $D$ and coincides with the image $c_1(E)_\R$ of first Chern class $c_1(E)$ under the natural morphism
$$H^2(M,\Z) \to H^2_{DR}(M,\R)$$
(the last cohomology group is canonically identified with $H^2(M,\R)$ by the De Rham
theorem).

Recall now the following classical example (see \cite{DE}) which is important for the present work. Assume that $M$ is oriented and $s$ is a smooth section of a line bundle $E$ which is of maximal rang 2 near the zero set $Z = s^{-1}(0)$. Then $Z$ is a closed submanifold of codimension 2 and with the orientation canonically induced by $M$ and $E$. Then the current $[Z]$ of integration over $Z$  belongs the same cohomology class as  the Chern class  $c_1(E)_{\R}$. Indeed, consider  the 1-form
\begin{eqnarray}
\label{Lelong1}
u = s^{-1}  Ds
\end{eqnarray}
defined on $M \setminus Z$. In local trivialization we have 
\begin{eqnarray}
\label{Lelong2}
 u = \frac{d s}{s} + \omega
 \end{eqnarray}
Note that $u$ has locally integrable coefficients near $Z$. The key argument is the following equality
\begin{eqnarray}
\label{Lelong3}
d\left ( \frac{d s}{s} \right ) = d \left ( s^*\frac{d\zeta}{\zeta} \right ) = s^*d \left ( \frac{d\zeta}{\zeta} \right ) = 
s^* (2\pi i \delta_0) = 2 \pi i [Z]
\end{eqnarray}
which holds in view of the Cauchy residue formula and (\ref{Zero_Back}). But $d\omega = \Omega$ and $(i/2\pi) \Omega = c_1(D)$. We have 
\begin{eqnarray}
\label{Lelong4}
(1/2\pi i)d u = [Z] - c_1(D)
\end{eqnarray}
Therefore $[Z]$ and $c_1(D)$ belong to the same class. Our goal is to extend the equation (\ref{Lelong4}) to wider classes of currents.

 \subsection{Hermitian connections over an almost complex manifold}
 
 We recall basic notions concerning almost complex manifolds.  Denote by $\D$  the
unit disc in $\C$ and by $J_{st}$  the standard complex structure
of $\C^n$; the value of $n$ will be clear from the context. We denote by $z = (z_1,...,z_n)$ the standard complex coordinates of $\C^n$.
A $C^1$ map  $f:(M',J') \to (M,J)$ between almost complex manifolds eqipped with almost complex structures $J'$ and $J$ respectively, is called $(J',J)$-holomorphic if satisfy the Cauchy-Riemann equations
\begin{eqnarray}
\label{CauchyRiemann0}
 df \circ J' = J \circ df
\end{eqnarray}
In the case whee $f$ is a diffeomorphism from $(M',J')$ to $M$ we can consider the diect image $f_{*}J' = df \circ J' \circ df^{-1}$.
It follows from (\ref{CauchyRiemann0}) that $f$ is $(J',f_{*}J')$ holomorphic. In the case where $f$ is a local coordinate map one can view $f_{*}J'$ as 
a coordinate representation of $J'$.

Two important special cases arise when $J'$ or $J$ is equal to $J_{st}$. We begin with the first case when $M' = \D$.
Then the equations (\ref{CauchyRiemann0}) take theform 
\begin{eqnarray}
\label{CauchyRiemann}
 df \circ J_{st} = J \circ df
\end{eqnarray}
A solution $f$  is called a $J$-holomorphic disc.
In local coordinates $z\in\C^n$, an almost complex structure
$J$ is represented by a $\R$-linear operator
$J(z):\C^n\to\C^n$, $z\in X^n$ such that $J(z)^2=-I$,
$I$ being the identity. Then the Cauchy-Riemann equations
(\ref{CauchyRiemann})  for a $J$-holomorphic disc $z:\D\to\C^n$, $z:\zeta \mapsto z(\zeta)$
can be written in the form
\begin{eqnarray}
\label{CauchyRiemann1}
z_\zeta + \mu(z) \overline{z_\zeta} = 0
\end{eqnarray}
where $\mu(z)$  a complex $n\times n$
matrix function uniquely detrmined by $J$.  We first recall the relation between $J$ and $\mu$ for
fixed $z$. Let $J:\C^n\to\C^n$ be a $\R$-linear map
so that $\det(J_{st}+J)\ne0$, where $J_{st} v=iv$.
Set $Q=-(J_{st}+J)^{-1}(J_{st}-J)$.
One can show that $J^2=-I$ if and only if
$QJ_{st}+J_{st} Q=0$, that is, $Q$ is a complex anti-linear operator.
Then there is a unique matrix $\mu \in Mat(n,\C)$ such that
$\mu v=Q\bar v$ for any $v\in\C^n$.
We introduce the manifolds of  linear maps and matrices $\mathcal J=\{J:\C^n\to\C^n: J\;{\rm is}\;\R{\rm-linear},\;J^2=-I,\;
\det(J_{st}+J)\ne0  \}$ and $\mathcal A=\{\mu \in Mat(n,\C): \det(I-\mu \bar \mu)\ne0 \}$.
One can show that  that the map $J\mapsto \mu$ is a birational
homeomorphism $\mathcal J\to\mathcal \mu$. Thus $J$ defines
a unique complex matrix function $\mu$ in $\Omega$ such that
$\mu(z)\in\mathcal A$, $z\in\Omega$. The matrix $\mu$ has the
same regularity properties as $J$.

Let now $z'$ be other local complex coordinates on $M$ and let $\mu'$ be the matrix function corresponding to  $J$ in these coordinates. 
It is easy to check that 
\begin{eqnarray}
\label{ComplexMatrix}
\mu'(z') = (z_z'\mu + z_{\overline z}')(\overline{z}_{\overline z}' + \overline{z}_z'\mu)^{-1} 
\end{eqnarray}
(see \cite{ST}). Note that $\mu(0) = 0$ if and only if $J(0) = J_{st}$. This condition always can be achieved by a linear change of coordinates. 
The following example will be used later. For $k > 0$ consider the dilations $\delta_k: z \mapsto z' = k^{-1/2}z$. Then 
$\mu'(z') = \mu(k^{-1/2}z')$. In particular $\vert \nabla \mu' \vert \le C k^{-1/2} \to 0$ as $k \to \infty$.

Consider now the second special case. Let $J$ be an almost complex structure
in a domain $\Omega\subset\C^n$.
Suppose $J(z)\in\mathcal J$, $z\in\Omega$. 
A $C^1$ function $f:\Omega\to\C$ is $(J,J_{st})$-holomorphic
if and only if it satisfies the Cauchy-Riemann equations
\begin{eqnarray}
\label{CRscalar}
f_{\bar z} - f_z \mu(z)  =0,
\end{eqnarray}
where $f_{\bar z} = (\partial f/\partial \overline{z}_1,...,\partial f/\partial \overline{z}_n)$ and $f_z = (\partial f/\partial {z}_1,...,\partial f/\partial {z}_n)$ are viewed as  row-vectors. Indeed, $f$ is $(J,J_{st})$ holomorphic if and only if 
for every $J$-holomorphic disc the composition $f \circ z$ is a usual holomorphic function  that is $\partial (f \circ z) /\partial\overline{\zeta} = 0$ on $\D$. Then the  Chain rule in combination with (\ref{CauchyRiemann1}) leads to (\ref{CRscalar}). Generally the only solutions to (\ref{CRscalar}) are constant functions  unless $J$ is integrable (then $\mu$ vanishes in suitable coordinates). Note also that (\ref{CRscalar}) is a linear PDE system while (\ref{CauchyRiemann}) is a quasilinear PDE for a vector function on $\D$.

Every $1$-form $\phi$ on $(M,J)$ admits a unique decomposition $\phi = \phi^{1,0} + \phi^{0,1}$ with respect to $J$. In particular, if $f:(M,J) \to \C$ is a $C^1$-complex function, we have $df = df^{1,0} + df^{0,1}$. We use the notation 
\begin{eqnarray}
\label{d-bar}
\partial_J f = df^{1,0} \,\,\,\mbox{and}\,\,\, \overline\partial_J f = df^{0,1}
\end{eqnarray}

In order to write this operator explicitely in local coordinates, we find a natural local basic in the space of (1,0) and (0,1) forms. We view  $dz = (dz_1,...,dz_n)^t$ and $d\overline{z} = (d\overline{z}_1,...,d\overline{z}_n)^t$ as vector-columns. Then the forms 
\begin{eqnarray}
\label{FormBasis}
\alpha = (\alpha_1,..., \alpha_n)^t = dz + \mu d\overline{z} \,\,\, \mbox{and} \,\, \overline\alpha = d\overline{z} + \overline\mu dz
\end{eqnarray}
form a basis in the space of  (1,0) and (0,1) forms respectively. Indeed, it suffices to note that for 1-form $\beta$ is (1,0) for if and only if for every $J$-holomorphic disc $z$ the pull-back $z^*\beta$ is a usual (1,0) form on $\D$. Using the equations (\ref{CauchyRiemann}) we obtain the claim.

Now we decompose the differential $df = f_zdz + f_{\overline{z}} d\overline{z} = \partial_J f + \overline\partial_J f $ in the basis $\alpha$, $\overline\alpha$ using (\ref{FormBasis}) and obtain the explicit expression 
\begin{eqnarray}
\label{d-bar2}
\overline\partial_J f = (f_{\overline{z}} (I - \overline{\mu}\mu)^{-1} - f_z (I - \mu\overline{\mu})^{-1})\overline\alpha
\end{eqnarray}

It is easy to check that the holomorphy condition $\overline\partial_J f = 0$ is equivalent to (\ref{CRscalar}). We note that the term $(I - \mu\overline\mu)^{-1}$ affects only constant in local estimates of the $\overline\partial_J$-operator which we will perfom in the next sections.

Consider the standard symplectic form
$$\Omega_0 = \frac{i}{2} \sum_{j=1}^n dz_j \wedge d\overline{z}_j$$
on $\C^n$. Then $\Omega_0 = id\omega$ with
$$\omega = \frac{1}{4}(\sum_{j=1}^n z_jd\overline{z}_j - \overline{z}_jdz_j)$$
Hence $-i\Omega_0 = d\omega$ is the curvature of a $U(1)$ connection on the trivial complex line bundle over $\C^n$ with connection matrix $\omega$. The connection $\omega$ defines a coupled $\overline{\partial}$-operator
$\overline{\partial}_\omega$ by
$$\overline{\partial}_\omega(f) = \overline{\partial} f + f\omega^{0,1}$$.

Now we bring together these two structures. Let $(M,\Omega)$ be a symplectic manifold with a 
{\it compatible} almost complex structure $J$. Consider a line bundle $L \to M$ with a  connection $\omega$ having the curvature $-i\Omega$. 
Then we can consider the $\overline\partial$-operator defined by $J$ and the connection matrix $\omega$. We set
\begin{eqnarray}
\label{d-bar5}
\partial_{J,\omega} f = Df^{1,0} \,\,\, \mbox{and} \,\,\, \overline\partial_{J,\omega} f = Df^{0,1}
\end{eqnarray}

Let $g$ be the Riemannian metric defined by $\Omega$ and $J$ that is $g(u,v) = \Omega(u,Jv)$.
This is well known that $g$ is an almost hermitian metric and $M$ is an almost K\"ahlerian manifold
(since $\Omega$ is closed).

 Let $E$ be a smooth complex vector bundle over an almost complex manifold $(M,J)$ of 
 complex dimension $n$. Denote by $A^{p,q}(M,E)$ the space of smooth sections of the bundle
 of $(p,q)$ forms valued in  $E$. A connection of type (1,0) on $E$ is a differential operator 
 $ \partial_{J,E}$ of order 1 acting on the complex $A^{ \bullet, \bullet}$ and satisfying the following two properies:
 
  \begin{itemize}
   \item[(i)]  $\partial_{J,E}: A^{p,q}(M,E)  \to A^{p+1,q}(M,E)$
    \item[(ii)] $ \partial_{J,E}(f  \wedge g) =  \partial_{J} f  \wedge g + (-1)^{deg f} f  \wedge  \partial_{J,E} g$
     \end{itemize}
     Similarly is defined the (0,1) connection $ \overline{ \partial}_{J,E}$. The operators d-bar introduced above are examples of these type of connections.

Let $(M,J)$ be an almost complex manifold of dimension $n$ and $N \subset M$ be a closed real $C^1$ submanifold of $M$ of real dimension $d$. The holomorphic tangent space at a point $x \in N$ is defined as 
$$H_xN = T_xN \cap J T_xN$$
Hence, $H_x$ is the maximal complex subspace of the tangent space $T_xN$. If the dimension of $H_xN$ is independent of $x$, then $N$  is called a CR submanifold and $\dim_\C H_xN$ is called the CR dimension of $N$. Recall also that $N$ is called generic (or generating) if the complex span of every $T_xN$ coincides with the whole $T_xM$.

The following proposition is well known in the case where an almost  complex structure is integrable ("the first structure theorem" for currents, see \cite{D}).

\begin{prop}
\label{FirstStrTh}
Let $N$ be a CRsubmanifold of CR dimension $k$ in $(M,J)$. Suppose that $T \in {\mathcal D}_{p,p}'(M)$ is a normal current such that $Supp T \subset N$ and $k < p$. Then $T = 0$.
\end{prop}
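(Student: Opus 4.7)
The plan is to reduce the statement to a pointwise linear-algebra assertion at $\mu$-almost every point of $N$, following the same strategy as the classical integrable case. The input will be Federer's support/rectifiability theorem, which produces a useful representation of $T$; the almost complex structure will only enter through the pointwise $J$-action on tangent vectors.

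First I would appeal to Federer's support theorem to represent $T$. Since $T$ is normal it is in particular flat, and its support is contained in the $C^1$ submanifold $N$, which is countably $2p$-rectifiable provided $\dim_\R N \ge 2p$ (otherwise $T = 0$ trivially, since every $2p$-test form restricts to zero on $N$). Federer's theorem therefore produces a Radon measure $\mu$ on $N$ and a $\mu$-measurable multivector field $\tau(x)\in\Lambda^{2p}T_xN\otimes\C$ such that
\[
\langle T,\omega\rangle \;=\; \int_N \langle\omega(x),\tau(x)\rangle\,d\mu(x)
\]
for every $(p,p)$ test form $\omega$. The bi-dimension $(p,p)$ hypothesis translates into the pointwise constraint $\tau(x)\in\Lambda^{p,p}T_xM$; in particular $\tau(x)$ is fixed by the induced endomorphism $J^{\wedge 2p}$, since on $\Lambda^{p,p}$ one has $J^{\wedge 2p}\tau=i^{p}(-i)^{p}\tau=\tau$.

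Next I would use the elementary identity
\[
\Lambda^{\ell}V\cap\Lambda^{\ell}W \;=\; \Lambda^{\ell}(V\cap W),
\]
valid for any two subspaces $V,W$ of a vector space (immediate from the characterization of $\Lambda^{\ell}V$ as those multivectors annihilated by all covectors in $V^{\perp}$, together with $V^{\perp}+W^{\perp}=(V\cap W)^{\perp}$). Applied with $V=T_xN$ and $W=JT_xN$, and combined with $\tau(x)\in\Lambda^{2p}T_xN\otimes\C$ and $\tau(x)=J^{\wedge 2p}\tau(x)\in\Lambda^{2p}(JT_xN)\otimes\C$, this yields
\[
\tau(x)\in\Lambda^{2p}(T_xN\cap JT_xN)\otimes\C \;=\; \Lambda^{2p}H_xN\otimes\C.
\]
Since $\dim_\R H_xN=2k<2p$, the space $\Lambda^{2p}H_xN$ is zero, so $\tau(x)=0$ for $\mu$-a.e.\ $x\in N$ and therefore $T=0$.

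The main obstacle, as I see it, is the representation of $T$ in the first step: Federer's support theorem is classically stated in $\R^n$, and one needs it in a form adapted to a $C^1$ submanifold $N$ of the smooth manifold $M$ and to currents that are only assumed normal rather than rectifiable. Once this representation is granted, the remainder is a purely pointwise linear-algebra argument on the complexified tangent spaces; in particular it is insensitive to the (non)-integrability of $J$, which is why the integrable proof carries over essentially verbatim to the almost complex setting.
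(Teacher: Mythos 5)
Your argument is correct, and it reaches the conclusion by a genuinely different (though philosophically parallel) route from the paper. The paper works entirely with an adapted coframe: it first reduces to the case where $N$ is generic, writes $N=\{\rho_1=\dots=\rho_{n-k}=0\}$, completes the forms $\partial_J\rho_j$ to a basis $\beta_1,\dots,\beta_n$ of $(1,0)$-forms, and uses normality of $T$ to get $\partial_J\rho_j\wedge T=\partial_J(\rho_j T)-\rho_j\,\partial_J T=0$; a multi-index count ($|I'|=p>k$ forces each complementary index to meet $\{k+1,\dots,n\}$) then kills every coefficient $T_{I,J}$. You instead package the same support information into Federer's representation $T=\mu\wedge\tau$ with $\tau(x)\in\Lambda^{2p}T_xN\otimes\C$, and replace the coefficient count by the pointwise identity $\Lambda^{\ell}V\cap\Lambda^{\ell}W=\Lambda^{\ell}(V\cap W)$ applied to $V=T_xN$, $W=JT_xN$, using that $(p,p)$-multivectors are fixed by $J^{\wedge 2p}$. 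The underlying mechanism is identical in both proofs — normality plus support forces tangency to $N$, the bidimension hypothesis forces tangency to $JT_xN$ as well, and $\dim_\R H_xN=2k<2p$ finishes — but the trade-offs differ: your version is coordinate-free, avoids the reduction to generic CR manifolds entirely, and isolates cleanly why integrability of $J$ is irrelevant; the paper's version is more self-contained, since the only measure-theoretic input is $\rho_jT=\rho_j\,dT=0$, whereas you must carefully formulate the tangency form of Federer's support theorem for normal currents supported on a $C^1$ submanifold of a manifold (you correctly flag this as the one point needing care; it does hold, by exactly the $fT=0$, $df\wedge T=0$ mechanism the paper uses). One cosmetic remark: your parenthetical disposal of the case $\dim_\R N<2p$ ("every $2p$-test form restricts to zero on $N$") is not a proof as stated, since $T$ is not a priori given by integration over $N$; but this case is subsumed by your main argument anyway, because the representation already forces $\tau(x)\in\Lambda^{2p}T_xN=0$ there.
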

\begin{proof} This is easy to see that locally every CR manifold is contained in a generic manifold of the same CR dimension (which can be obtained by a removal of suitable local defining functions). Therefore without loss of generality we can assume that $N$ is generic. Fix a point of $N$. In local coordinates we can identify $M$ with a a neighborhood $U$ of the origin in $\C^n$; we can assume also that $J(0) = J_{st}$. After a $ \C$-linear change of coordinates one can also assume that $$N \cap U = \{ z \in U: \rho_j(z) = 0, j=1,...,n-k \}$$ and $\rho_j(z) = \Re z_{k+j} + o(\vert z \vert)$. Consider a local basis of (1,0) forms $\beta_j$ where $\beta_j = \alpha_j$ defined by (\ref{FormBasis}) when $j=1,...,k$ and $\beta_{k+j} = \partial_J\rho_{j-k}$, $j= k+1,...,n$. Then $$T = \sum_{\vert I \vert, \vert J \vert = n-p} T_{I,J} \beta_I \wedge \overline \beta_J.$$ Since the current $T$ is normal and $Supp T $ is contained in $N$, we conclude that $ \rho_j T = 0$ and $\rho_j \partial_J T  = 0$ for every $j$. Hence 
$\partial_J \rho_j \wedge T = \partial_J(\rho_j T) - \rho_j \partial_J T = 0$ for $j=1,...,n-k$ that is $\beta_j T = 0$ for $j=k+1,...,n$. Denote by $I'$ the multi-index which is the complement of the multi-index $I$. Since $p > k$, the multi-index  $I'$ contains at least one entry from  $\{ k+1,...,n\}$.
Hence $T \wedge \beta_{I'} \wedge \overline\beta_{J'} =0$ and $T_{I,J} = 0$.
\end{proof}

 \subsection{The fundamental equation}  Consider two complex line bundles $F$ and $L$ over a smooth manifold $M$. We suppose that $F = M \times \C$ is the trivial bundle. Then a smooth section $s: M \to L$ can be viewed as a bundle map  $\alpha: F \to L$ defined by 
 $\alpha(1) = s$ on each fiber.

 Let $D_L$ be a connection on $L$ with the gauge potential (the connection matrix)  $\omega_L$. Then we can define a new  connection $D$ on $F$ by $D = \alpha^{-1} D_L \alpha$. The gauge potential for $D$ is given by
 \begin{eqnarray}
 \label{gauge_pot}
 \tau = \frac{d\sigma}{\sigma} + \omega_L
 \end{eqnarray}
 where $\sigma$ is a representation of $s$ in a local trivialization.
 This is a well defined global 1-form outside the zero set of $s$. We suppose that $s$ is atomic. Then $\tau$ extends through $Z_s$ as a well defined global current which is Federer flat, i.e. admits a representation $\beta + d\gamma$ with $\beta,\gamma$  are in $L^1_{loc}$.
 
 The following result is due to Harvey-Lawson \cite{HL1}:
 
 \begin{prop}
 \label{HLtheo1}
 Let $E$ be a complex line bundle over $M$ with a connection $D_L$. Let $s$ be a smooth section of $L$ satisfying the following properties:
 \begin{itemize}
 \item[(i)]  The section $s$ vanishes to finite order.
 \item[(ii)] The zero set $Z$ of $s$ has Minkowski codimension greater than one.
 \item[(iii)] The set $Sing Z$ has Hausdorff measure zero in codimension-two.
 \item[(iv)] The current $Div(s)$ has locally finite mass.
 \end{itemize}
 Then $Div(s)$ is of the form 
 $$Div(s) = \sum_j n_j [Z_j]$$
 where $n_j$ are integers and the $Z_j$'s are oriented codimension-two components of $Reg Z$. Furthermore, we have the equation
 \begin{eqnarray}
 \label{LP1}
 \frac{1}{2\pi i} d\tau = \sum n_j [Z_j] - c_1(D_L)
 \end{eqnarray}
 \end{prop}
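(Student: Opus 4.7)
The plan is to reduce the Lelong--Poincar\'e equation (\ref{LP1}) to the classical Cauchy residue formula pulled back via a local scalar representative of $s$, and to read off the Chern-class contribution from $d\omega_L$. The structural assertion $Div(s)=\sum_j n_j[Z_j]$ is essentially free from the theory already developed: condition (i) combined with the algebraic vanishing estimate (\ref{Loj}) and condition (ii) on the Minkowski codimension make $s$ atomic, so the decomposition (\ref{divisor2}) holds on $M\setminus Sing(Z)$; conditions (iii) and (iv) are then precisely what allow this decomposition to be extended across $Sing(Z)$ to all of $M$.

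For the main equation I would work in a local trivialization $(U,e)$ with $D_L e=\omega_L e$, writing $s=\sigma e$ for an atomic scalar $\sigma:U\to\C$. On $U\setminus Z$ one has
$$\tau=\frac{d\sigma}{\sigma}+\omega_L,$$
and since $s$ is atomic this form admits an $L^1_{loc}$ (hence Federer flat) extension across $Z$, as noted just before the statement. Computing $d\tau$ distributionally, the piece $d\omega_L$ contributes the curvature $\Omega_L$, while for the remaining term I would use the residue identity $d(d\zeta/\zeta)=2\pi i\,[0]$ on $\C$ together with the pullback formula (\ref{Zero_Back}) and the identification $Div(\sigma)=\sigma^*[0]$ underlying (\ref{divisor1}):
$$d\!\left(\frac{d\sigma}{\sigma}\right)=\sigma^*d\!\left(\frac{d\zeta}{\zeta}\right)=2\pi i\,\sigma^*[0]=2\pi i\sum_j n_j[Z_j].$$
Combining, dividing by $2\pi i$, and invoking the convention $c_1(D_L)=(i/2\pi)\Omega_L$ from the preceding subsection yields (\ref{LP1}). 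Gauge invariance of $\tau$ makes this local computation global: under a change of frame $e=\tilde e\,a$ one has $\tilde\sigma=a\sigma$ and $\tilde\omega_L=\omega_L-d\log a$, so the two $d\log a$ contributions cancel and the same equation patches across trivializations.

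The main obstacle is rigorously justifying $d(d\sigma/\sigma)=2\pi i\,Div(s)$ as currents on all of $U$. Since $\sigma$ is neither a submersion nor regular along $Z$ and may vanish to arbitrarily high order, this pullback cannot be handled by a naive transversality argument. The atomic formalism is designed for exactly this situation: by definition the pullback $\sigma^*(d\zeta/\zeta)$ (a linear combination of terms of the form $\sigma^*(dy^I/|y|^p)$ with $p=|I|\le 1$) admits an $L^1_{loc}$ extension across $Z$, and the commutation $d(\sigma^*(d\zeta/\zeta))=\sigma^*(d(d\zeta/\zeta))$ then holds in the distributional sense precisely because that extension is locally integrable. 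Hypothesis (iii), which forces any normal codimension-two current supported on $Sing(Z)$ to vanish, rules out a spurious residual contribution concentrated on the singular stratum and closes the argument.
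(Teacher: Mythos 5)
First, note that the paper itself offers no proof of this statement: it is quoted from Harvey--Lawson \cite{HL1} with a bare citation, so there is no internal argument to compare yours against. Your reconstruction follows the natural route --- the one the paper's own heuristic computation (\ref{Lelong3})--(\ref{Lelong4}) carries out for regular sections --- and your treatment of the structural assertion (atomicity from (i)--(ii), extension of the decomposition across $Sing(Z)$ from (iii)--(iv)), of the curvature term, and of gauge invariance of $\tau$ is all sound.

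The gap is in your justification of the key identity $d(d\sigma/\sigma)=2\pi i\,Div(s)$. You assert that the commutation $d(\sigma^*(d\zeta/\zeta))=\sigma^*(d(d\zeta/\zeta))$ holds ``precisely because that extension is locally integrable.'' This is backwards: local integrability of a form smooth off a small set does \emph{not} imply that the exterior derivative of its $L^1_{loc}$ extension agrees with the extension of its pointwise exterior derivative --- the angular form $\theta$ of (\ref{potential}) is itself the canonical counterexample ($\theta\in L^1_{loc}(\R^2)$, $d\theta=0$ pointwise off the origin, yet $d\theta=2\pi[0]$ as a current), and the residue you are trying to produce is exactly the failure of this naive commutation. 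The correct argument splits $d\zeta/\zeta=d\log|\zeta|+i\theta$: for the imaginary part, $d$ of the $L^1_{loc}$ extension of $\sigma^*\theta$ equals $2\pi\,Div(\sigma)$ \emph{by the definition} (\ref{divisor1}), and the identification of $Div(\sigma)$ with $\sum_j n_j[Z_j]$ is the Harvey--Semmes structure theorem rather than a formal pullback of $[0]$; for the real part, one must show that $d$ of the extension of $d\log|\sigma|$ vanishes, which holds because that extension differs from the distributional differential of the $L^1_{loc}$ function $\log|\sigma|$ by a flat degree-one current supported on $Z$, killed by the Federer flat support theorem thanks to hypothesis (ii) (not (iii), which only controls the singular stratum). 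With these two points supplied the proof closes, but as written the central step rests on a false principle.
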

 The formula (\ref{LP1}) is a direct generalization of (\ref{Lelong4}). This fundamental equation gives a relation between the connection form, the curvature and the zero set of a section. In the special case of regular sections (i.e. sections which are submersions) this equation was used by Donaldson.

 \section{The Lelong-Poincar\'e formula for complex line bundles}
 
 This section is devoted to the  important special case of line bundles.

\subsection{Lelong-Poincar\'e formula for atomic sections}

Let  $(M,J)$ be an almost complex variety and let  $L\rightarrow M$ be a complex line bundle  over 
$M$. Fix a hermitian metric  $\langle\ ,\ \rangle$ on $L$ as well as a hermitian connection $D_L$ 
with the curvature  $\Omega$.

Let  $s$ be a smooth section of  $L$. Then the associated singular  connection defined in the previous subsection  
has the form  $Ds=\tau s$. In a local trivialization we have the representation (\ref{gauge_pot}). The connection form $\omega$ of $D_L$ can be decomposed into $\omega = \omega^{1,0} + \omega^{0,1}$. We assume that the local frame is orthonormal and therefore $i\omega$ is real that is 

\begin{eqnarray}
\label{LP2}
-\omega^{0,1} = \overline{\omega^{1,0}}
\end{eqnarray}
The current $\tau$ in its turn can be decomposed into the sum of  $(1,0)$  and $(0,1)$ forms denoted by  $\tau^{(1,0)}$ and $\tau^{(0,1)}$ respectively. Hence $\tau$ admits the decomposition 
$$\frac{d\sigma}{\sigma} + \omega  = \tau^{1,0} + \tau^{0,1}$$ 
with 
\begin{eqnarray}
\label{LP3}
\tau^{1,0} = \frac{\partial_J \sigma}{\sigma} + \omega^{1,0} \,\,\, \mbox{and} \,\,\,\tau^{0,1} = \frac{\overline{\partial}_J \sigma}{\sigma} + \omega^{0,1}
\end{eqnarray}

Let $s$ be a smooth atomic section of $L$. Since the section $s$ is atomic, the forms  $\tau$, $\tau^{(0,1)}$ et $\tau^{(1,0)}$ are locally integrable and define currents.

\begin{theorem}
\label{Prop_LP5}  We have the following equation for currents:
\begin{eqnarray}
\label{AlHolPL}
{1\over 2\pi i}d\ \partial_{J} \log \langle s,s\rangle =Div(s)+{1\over 2\pi i}d\overline{\tau^{(0,1)}}- {1\over 2\pi i}d\tau^{(0,1)}-{i\over 2\pi}\Omega. 
\end{eqnarray} 
\end{theorem}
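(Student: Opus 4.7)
The plan is to reduce the identity to Proposition \ref{HLtheo1} by computing $\partial_J\log\langle s,s\rangle$ explicitly in a local orthonormal frame and comparing with the $(1,0)$--$(0,1)$ decomposition (\ref{LP3}) of $\tau$.

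First, I would work on an open set $U\subset M$ over which $L$ is trivialized by an orthonormal local frame, write $s=\sigma\,e$ with $\sigma:U\to\C$ smooth, and note that $\langle s,s\rangle=|\sigma|^2$. Away from the zero set $Z_s$ one has the pointwise identity
\begin{equation*}
d\log|\sigma|^2=\frac{d\sigma}{\sigma}+\frac{d\bar\sigma}{\bar\sigma},
\end{equation*}
and I would split this into types with respect to $J$. Using that $J$ is real, $\partial_J\bar\sigma=\overline{\bar\partial_J\sigma}$, so
\begin{equation*}
\partial_J\log|\sigma|^2=\frac{\partial_J\sigma}{\sigma}+\overline{\left(\frac{\bar\partial_J\sigma}{\sigma}\right)}.
\end{equation*}
From (\ref{LP3}) we have $\partial_J\sigma/\sigma=\tau^{1,0}-\omega^{1,0}$ and $\bar\partial_J\sigma/\sigma=\tau^{0,1}-\omega^{0,1}$, and the hermitian condition (\ref{LP2}) gives $\overline{\omega^{0,1}}=-\omega^{1,0}$. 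Combining, the $\omega^{1,0}$ terms cancel and
\begin{equation*}
\partial_J\log\langle s,s\rangle=\tau^{1,0}+\overline{\tau^{0,1}}=\tau-\tau^{0,1}+\overline{\tau^{0,1}}
\end{equation*}
on $U\setminus Z_s$.

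Next I would promote this equality to an equality of currents on the whole of $M$. Since $s$ is atomic, the forms $\tau$, $\tau^{0,1}$ and $\overline{\tau^{0,1}}$ extend from $U\setminus Z_s$ as currents with coefficients in $L^1_{\rm loc}$; in particular the right hand side has unambiguous meaning as a current on $U$. On the left hand side, $\log\langle s,s\rangle$ is itself locally integrable (the modulus of $\sigma$ satisfies a {\L}ojasiewicz bound by Definition 2.1(i) and the discussion following it), so $\partial_J\log\langle s,s\rangle$ is well defined as a current, and the pointwise identity away from $Z_s$ passes to currents because both sides are $L^1_{\rm loc}$ and $Z_s$ has measure zero. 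The global independence of this identity from the trivialization follows from the gauge transformation law of $\omega$ and the fact that $\log\langle s,s\rangle$ is coordinate free.

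Finally, I would apply the exterior derivative $d$ (in the sense of currents) to the equality
\begin{equation*}
\partial_J\log\langle s,s\rangle=\tau-\tau^{0,1}+\overline{\tau^{0,1}},
\end{equation*}
divide by $2\pi i$, and substitute the fundamental equation (\ref{LP1}) of Proposition \ref{HLtheo1}:
\begin{equation*}
\frac{1}{2\pi i}\,d\tau=\mathrm{Div}(s)-c_1(D_L)=\mathrm{Div}(s)-\frac{i}{2\pi}\Omega.
\end{equation*}
This yields
\begin{equation*}
\frac{1}{2\pi i}d\,\partial_J\log\langle s,s\rangle=\mathrm{Div}(s)+\frac{1}{2\pi i}d\overline{\tau^{0,1}}-\frac{1}{2\pi i}d\tau^{0,1}-\frac{i}{2\pi}\Omega,
\end{equation*}
which is exactly (\ref{AlHolPL}). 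The main obstacle I anticipate is the distributional extension across $Z_s$ when passing from the pointwise identity to currents, but this is precisely the content of the atomicity hypothesis, so Proposition \ref{HLtheo1} supplies what is needed; the non-integrability of $J$ enters only through the appearance of the supplementary terms $d\overline{\tau^{0,1}}$ and $d\tau^{0,1}$, which do not vanish because $\overline{\bar\partial_J\sigma}$ is not of pure type $(1,0)$ when $J\neq J_{st}$.
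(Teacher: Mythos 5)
Your proposal is correct and follows essentially the same route as the paper: compute $\partial_J\log|\sigma|^2=\tau^{1,0}+\overline{\tau^{0,1}}=\tau+\overline{\tau^{0,1}}-\tau^{0,1}$ in an orthonormal frame using (\ref{LP3}) and (\ref{LP2}), take $d$, and invoke the Harvey--Lawson equation (\ref{LP1}). Your added care about the $L^1_{loc}$ extension across $Z_s$ is a welcome elaboration of what the paper leaves implicit, but it is not a different argument.
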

Note  that $\log \langle s, s \rangle$ is a smooth function defined on $M \setminus Z$ where $Z = s^{-1}(0)$.

\begin{proof} Chosse a local orthonormal frame $e$ such that $s = \sigma e$. Then $\langle s, s \rangle = \sigma \overline{\sigma}$; We have

\begin{eqnarray*}
& &\partial_{J} \log \sigma \overline{\sigma} =  \frac{1}{\vert \sigma \vert^2} ( (\partial_{J} \sigma)\overline{\sigma} +  \sigma\overline{(\overline{\partial}_{J}\sigma)}) =\tau^{(1,0)}+\overline{\tau^{(0,1)}}=\tau +\overline{\tau^{(0,1)}}-\tau^{(0,1)}.
 \end{eqnarray*}
The second equality follows from the representations (\ref{LP3})  and the equality (\ref{LP2}). Multiplying by ${1\over 2\pi i}$ and taking the exterior derivative we obtain 
$${1\over 2\pi i}d\ \partial_{J}\log \vert \sigma \vert^2 ={1\over 2\pi i}d\tau + {1\over 2\pi i}d\overline{\tau^{(0,1)}}- {1\over 2\pi i}d\tau^{(0,1)}.$$ 

Since $s$ is an atomic section, we can apply now (\ref{LP1}). This concludes the proof.\end{proof}

Note that in general the operator  $d$ can not be replaced by  $\partial_J$ since for an almost complex (not integrable) structure $J$ we have  $\partial^{2}_J\not = 0$.

In the special case where an almost complex structure  $J$ is integrable and  $L$ is a hermitian holomorphic line bundle over $M$, we  apply Proposition \ref{Prop_LP5} with the Chern connection as $D_L$. Then we obtain the usual Lelong-Poincar\'e formula  for holomorphic sections of  $L$ :

\begin{eqnarray}
\label{PL6}
{1\over 2\pi i}\bar\partial _J\ \partial_J\log \langle s,s \rangle = Div(s)- c_1(L)
\end{eqnarray}
Indeeed, in this case $\partial^{2}_J=0$ and $\tau^{(0,1)}=0$ if $s$ is holomorphic. 

As another special case we refind the result of  Elkhadra \cite{EL} for sections $s$  holomorphic along the zero set  $Z = s^{-1}(0)$ that is $\bar\partial_{J,L}(s)=0$ on $Z$. Note that $Z$ supposed to be a smooth manifold of real codimensin $2$. Writing  $s=\sigma e$ where $e$ is a local frame of $L$, we have
$$\bar\partial_{J,L}(s)={\bar\partial_J (\sigma)}e + \sigma\bar\partial_{J,L}(e).$$ 
Therefore $\overline{\partial}_J \sigma=\sigma h_1+\overline{\sigma} h_2$ where $h_1$ and $h_2$ are smooth functions. Therefore  the currents $\overline{\partial}_{J,L} s / s$ and $d\tau^{(0,1)}$ are in  $L^1_{loc}$. Note that if $J$ is integrable and  $s$ is holomorphic on  $M$  then  $\tau^{(0,1)}=0$ and we obtain the  Lelong formula with a connection different from the Chern connection.

\subsection{Almost holomorphic sections}
We recall the definition of an almost holomorphic section in the sense of Donaldson. Let $(M,J)$ be an almost complex manifold and $L \longrightarrow M$ be a complex line bundle over $M$. Since $J$ is fixed, we simply  write $\partial_L$ instead of $\partial_{J,L}$.

\begin{defn} A smooth  section $s$ of $L$ is called ($\lambda$-) almost holomorphic if there exists   $0 \le \lambda<1$ such that the estimate 
\begin{eqnarray}
\label{D1}
\vert\bar\partial_{L}s\vert \leq \lambda\vert \partial_{L} s\vert 
\end{eqnarray}
holds at every point of  the zero set $Z = s^{-1}(0)$ \footnote{Usually  $\lambda$ will be small }.\end{defn}

We say that an almost holomorphic section $s$ is regular if it is of maximal rang 2 in a neighborhood of its zero set $Z$. In this case $Z$ is a 
real submanifold of $M$ of codimension 2. By  Donaldson,  $Z$ is a symplectic submanifold of $M$ if   $M$ is equipped with a symplectic structure  $\Omega$ such that $J$ is compatible with $\Omega$. 

Given $\delta > 0$ and a subset $X$ of $M$ we use the notations 
$$X^\delta = \{ p \in M: d(p,X) \le \delta \}$$
$$X_\delta = \{ p \in M: d(p,X) \ge \delta \}$$
We also denote by $d\nu$ a volume form on $M$.

\begin{prop} There exist constants $C_1 > 0$ and $C_2 > 0$ with the following property. For every  regular almost holomorphic section $s$ and    every  compact subset $K$ of $M$ there is (a sufficiently small ) $\varepsilon>0$ such that the current $d\tau^{0,1}$ from (\ref{AlHolPL}) admits the decomposition 
$$d\tau^{0,1} = T_{1}^{\varepsilon}+T_{2}^{\varepsilon}.$$ The currents  $T_{1}^{\varepsilon}$ and  $T_{2}^{\varepsilon}$ satisfy the following estimates: 
\begin{eqnarray}
\label{D2}
\vert\langle T_{1}^{\varepsilon},\phi\rangle\vert  \leq C_1 \lambda\vert\vert d\phi\vert\vert_{\infty}I_1
\end{eqnarray}
\begin{eqnarray}
\label{D3}
\vert \langle T_{2}^{\varepsilon},\phi\rangle\vert  \leq C_2\vert\vert \phi\vert\vert_{\infty}(I_2 + I_3)  
\end{eqnarray}
Here 
$$I_1 = \int_{Supp(\phi)\cap Z^\varepsilon}\left \vert{\partial_L s\over s }\right\vert d\nu$$
$$I_2 = \int_{Supp(\phi)\cap Z_{(1/2)\varepsilon}}\left\vert d\left({\bar\partial_L(s)\over s }\right)\right\vert d\nu$$
$$I_3 = (\lambda	/\varepsilon)\int_{Supp(\phi)\cap Z_{(1/2)\varepsilon}\cap Z^\varepsilon}\left \vert{\partial_L s\over s }\right\vert d\nu$$
with an arbitrary test form $\phi$ whose  support is contained in  $K$.
\end{prop}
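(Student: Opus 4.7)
My plan is to introduce a smooth cutoff $\chi_\varepsilon$ adapted to $Z$ and split $d\tau^{0,1}$ via the Leibniz rule. I will take $\chi_\varepsilon : M \to [0,1]$ smooth with $\chi_\varepsilon \equiv 0$ on $Z^{(1/2)\varepsilon}$, $\chi_\varepsilon \equiv 1$ on $Z_\varepsilon$, and $|d\chi_\varepsilon| \le C/\varepsilon$ on the transition shell $Z^\varepsilon \cap Z_{(1/2)\varepsilon}$. Because $s$ is atomic, the form $\tau^{0,1} = \bar\partial_L s/s$ is locally integrable, so the identity
$$d\tau^{0,1} \;=\; d\bigl((1-\chi_\varepsilon)\tau^{0,1}\bigr) \;+\; d\bigl(\chi_\varepsilon\tau^{0,1}\bigr)$$
holds in the sense of currents, and I set $T_1^\varepsilon := d((1-\chi_\varepsilon)\tau^{0,1})$, $T_2^\varepsilon := d(\chi_\varepsilon\tau^{0,1})$. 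The idea is that $T_1^\varepsilon$ will be estimated by integration by parts (accounting for the $\|d\phi\|_\infty$ in \eqref{D2}), while $T_2^\varepsilon$ is a locally $L^1$ form acting directly on $\phi$ (accounting for the $\|\phi\|_\infty$ in \eqref{D3}).

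For $T_1^\varepsilon$, integration by parts gives $\langle T_1^\varepsilon,\phi\rangle = \int (1-\chi_\varepsilon)\,\tau^{0,1}\wedge d\phi$, so the $L^\infty$-bound on $d\phi$ leaves me with $\int_{\supp\phi\,\cap\, Z^\varepsilon} |\tau^{0,1}|\, d\nu$. The decisive step is to promote the almost-holomorphy inequality from $Z$ to a tubular neighborhood: because $s$ is regular, $|\partial_L s|$ is bounded away from $0$ on the compact set $Z \cap K$, and by smoothness of $\partial_L s$ and $\bar\partial_L s$ one can shrink $\varepsilon$ so that $|\bar\partial_L s| \le 2\lambda|\partial_L s|$ throughout $Z^\varepsilon \cap K$. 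Dividing by $|s|$ then yields $|\tau^{0,1}| \le 2\lambda|\partial_L s/s|$ on $Z^\varepsilon\setminus Z$, and substitution produces \eqref{D2} with $C_1 = 2$.

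For $T_2^\varepsilon$ the form $\chi_\varepsilon\tau^{0,1}$ is globally smooth, since it vanishes on $Z^{(1/2)\varepsilon}$ which contains the singular locus, so the classical Leibniz rule applies:
$$T_2^\varepsilon \;=\; d\chi_\varepsilon \wedge \tau^{0,1} \;+\; \chi_\varepsilon\, d\tau^{0,1}.$$
The first summand is supported in the shell $Z^\varepsilon\cap Z_{(1/2)\varepsilon}$, where $|d\chi_\varepsilon| \le C/\varepsilon$ and the pointwise bound $|\tau^{0,1}|\le 2\lambda|\partial_L s/s|$ still applies; multiplying the two produces the factor $\lambda/\varepsilon$ in $I_3$. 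The second summand $\chi_\varepsilon\, d\tau^{0,1}$ is supported in $Z_{(1/2)\varepsilon}$, where $\tau^{0,1}$ is smooth and coincides with $\bar\partial_L s/s$, so $|\langle \chi_\varepsilon d\tau^{0,1},\phi\rangle|$ is bounded by $\|\phi\|_\infty \cdot I_2$. Adding the two contributions gives \eqref{D3}. The hard part of the whole argument is the pointwise extension of the almost-holomorphy inequality from $Z$ to $Z^\varepsilon$; once this is in place, everything else is a routine Leibniz computation with an adapted cutoff.
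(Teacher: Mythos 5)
Your proposal is correct and follows essentially the same route as the paper: a cutoff adapted to the shells $Z^{(1/2)\varepsilon}\subset Z^{\varepsilon}$ (yours is the complement $1-\chi$ of the paper's $\chi$), the near-$Z$ piece estimated by moving $d$ onto $\phi$ and using the almost-holomorphy bound promoted to the tube $Z^{\varepsilon}\cap K$ via regularity of $s$, and the far piece handled by the Leibniz rule, with $d\chi\wedge\tau^{0,1}$ producing $I_3$ and $\chi\,d\tau^{0,1}$ producing $I_2$. You in fact supply more detail than the paper, which simply asserts that the estimates follow from (\ref{D1}).
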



We see that $T_{1}^{\varepsilon}$ is a current of order $1$ such that its norm tends to zero when $\lambda$ tends to zero i.e. when a section  $s$  is close to a holomorphic one along $Z$.  The current $T_{2}^{\varepsilon}$ is of order zero but its norm in general is not small when $\lambda$ is small.

\begin{proof} Since  $s$ is a regular section,  there exists $\varepsilon>0$ such that for every point  $p\in K\cap Z^\varepsilon$ one has  $\vert \bar\partial_L(s)\vert\leq 2\lambda\vert\partial_L(s)\vert$. Fix a smooth cut-off function  $0\leq \chi\leq 1$ with  support containing in  $K\cap Z^\varepsilon$  and equal to   1 on $K\cap Z^{(1/2)\varepsilon}$. We have 
$$\bar\partial_L(s)=\chi \bar\partial_L(s)+(1-\chi)\bar\partial_L(s).$$ From the equality  $\tau^{(0,1)}=\bar\partial_L(s)/s$ we obtain
$$\langle d\tau^{(0,1)},\phi\rangle = \langle \chi \bar\partial_L(s)/s,d\phi \rangle + \langle d((1-\chi )\bar\partial_L(s)/s),\phi \rangle = T_1^\varepsilon + T_2^\varepsilon.$$
Estimates (\ref{D2}) and (\ref{D3}) now follows from (\ref{D1}).\end{proof}

\section{The Lelong-Poincar\'e formula for vector bundles  }
In this section we consider the general case of vector bundles of arbitrary rank.

\subsection{Bochner-Martinelli kernel} We begin with the case of the trivial vector bundle $E \longrightarrow M$ of rang $n$ with the flat connection over an almost hermitian manifold $(M,J)$. Then every section of $E$ can be identified with a complex vector function $u = (u_1,...,u_n)$ (viewed as a vector row) on $M$. Denote by $h$ the matrix (with respect to the canonical basis of $\C^n$) of the hermitian metric on $M$.Set $u^* = h \bar u^t$ so that $\vert u \vert^2 = u u^*$. Let  $\beta$ be a form defined by  
\begin{eqnarray}
\label{BM1}
 \beta=\frac{du  u^*}{\vert u\vert^2}.
 \end{eqnarray}
 The form 
 \begin{eqnarray}
 \label{BM2}
B(u)=\left(1/2\pi i\right)^n \beta\wedge (d\beta)^{n-1}.
\end{eqnarray}
is called the Bochner-Martinelli form based on the metric $h$. In the case where $h$ is the identity matrix and $M$ is a complex manifold for any holomorphic sections, the form (\ref{BM2}) coincides with the standard Bochner-Martinelli kernel which can be written in terms of the standard complex structure as follows:
\begin{eqnarray}
\label{BM1.1}
B_{st}(u) = \left(i/2 \pi\right)^n \partial \log \vert u \vert^2 \wedge (\bar\partial\partial \log \vert u \vert^2)^{n-1}
\end{eqnarray}

 We use the following local result (\cite{HL1}, pp 173 Def.A.21, pp174 Prop. A.22 and Lemma A.23):
\begin{prop} 
In the above trivialization chart the following equation holds:
\begin{eqnarray}
\label{BM3}
 dB(u) = Div(s)
 \end{eqnarray}
\end{prop}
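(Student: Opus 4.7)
The plan is to realize $B(u)$ as the pullback under the section $u\colon M\to\C^n$ of a universal form $\tilde B$ on $\C^n\setminus\{0\}$, and transport the classical identity $d\tilde B=[0]$ through that pullback. As a first reduction, at each point one can take a local frame of $E$ that is $h$-orthonormal; the resulting gauge change $u\mapsto uA$ with $A(x)\in GL(n;\C)$ leaves $Div(u)$ unchanged and alters $\beta$ only by terms built from the smooth function $A$. So it suffices to treat $h=I$, in which case $\beta=\sum_j\overline{u_j}\,du_j/|u|^2 = u^{*}\tilde\beta$ with $\tilde\beta=\sum_j\overline{y_j}\,dy_j/|y|^2 = \partial\log|y|^2$ on $\C^n\setminus\{0\}$, and $B(u)=u^{*}\tilde B$ for $\tilde B=(2\pi i)^{-n}\tilde\beta\wedge(d\tilde\beta)^{n-1}$.

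For the model identity on $\C^n$, I would first check that $\tilde B$ is closed on $\C^n\setminus\{0\}$: since $\partial^2=0$, one has $d\tilde B=(2\pi i)^{-n}(d\tilde\beta)^n$, and $d\tilde\beta=\bar\partial\partial\log|y|^2$ admits the Euler field $E=\sum(y_j\partial_{y_j}+\overline{y_j}\partial_{\overline{y_j}})$ in its kernel by the homogeneity of $\log|y|^2$, so its rank is at most $2n-1$ and its top power vanishes. A homogeneity count gives $|\tilde B|=O(|y|^{-(2n-1)})$, which is integrable in $\R^{2n}$, so $\tilde B$ extends to a current on $\C^n$. Applying Stokes on $\C^n\setminus B(0,\varepsilon)$ and using the classical normalisation $\int_{S^{2n-1}(\varepsilon)}\tilde B=1$ of the Bochner--Martinelli kernel then yields $\langle d\tilde B,\phi\rangle = \phi(0)$ for every test function $\phi$, i.e.\ $d\tilde B=[0]$.

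It remains to pull back by $u$. Since $s$ (hence $u$) is atomic, each coefficient of $u^{*}\tilde B$ extends to an $L^1_{loc}$ current across $Z_u$, and the commutation $d\circ u^{*}=u^{*}\circ d$ holds in the sense already used by the paper to define $Div(u)=d\bigl(\sigma_{2n}^{-1}u^{*}\theta\bigr)=u^{*}[0]$ in \eqref{divisor1}. Combining with the model identity gives $dB(u)=u^{*}(d\tilde B)=u^{*}[0]=Div(u)=Div(s)$, as claimed. The main obstacle is exactly this pullback step: $u$ is generally not a submersion, so the inverse-image formula \eqref{Zero_Back} does not apply directly, and one must instead rely on the integrability of $u^{*}(dy^I/|y|^p)$ that atomicity provides --- this is precisely the hypothesis encoded in the definition of an atomic section and is the reason the identity reduces to a routine distributional bookkeeping rather than a difficult regularity argument.
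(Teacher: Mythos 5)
The paper offers no proof of this proposition at all: it is quoted verbatim from Harvey--Lawson (Def.~A.21, Prop.~A.22 and Lemma A.23 of \cite{HL1}), and your strategy --- realize $B(u)$ as the pullback of a universal kernel $\tilde B$ on $\C^n\setminus\{0\}$, prove $d\tilde B=[0]$, then transport through $u$ --- is exactly the strategy of that reference. Your treatment of the model identity is sound: $d\tilde B=(2\pi i)^{-n}(\bar\partial\partial\log|y|^2)^n=0$ off the origin because the Levi form of $\log|y|^2$ degenerates along the radial direction, the coefficients are $O(|y|^{-(2n-1)})$ hence $L^1_{loc}$ in $\R^{2n}$, and Stokes on the complement of a small ball together with the normalisation of the Bochner--Martinelli period gives $d\tilde B=[0]$.

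The gap is in the last step, which you describe as ``routine distributional bookkeeping'' but which is in fact the whole content of Harvey--Lawson's Lemma A.23. The paper \emph{defines} $Div(u)$ as $d\bigl(\sigma_{2n}^{-1}u^*\theta\bigr)$ for one specific potential, the solid-angle form $\theta$; nothing in that definition asserts that $d(u^*\alpha)=u^*(d\alpha)$ for an arbitrary kernel $\alpha$ with $d\alpha=[0]$, so you cannot invoke ``the commutation already used to define $Div(u)$'' to conclude $d(u^*\tilde B)=Div(u)$. What is actually needed is a comparison of kernels: $\tilde B$ and $\sigma_{2n}^{-1}\theta$ are closed $(2n-1)$-forms on $\C^n\setminus\{0\}$ with equal period on $S^{2n-1}$, hence $\tilde B-\sigma_{2n}^{-1}\theta=d\gamma$ for a smooth $\gamma$ homogeneous of degree $-(2n-2)$; atomicity makes $u^*\gamma$ and both pulled-back kernels $L^1_{loc}$, and one must then show that the current $d(u^*\gamma)-u^*(d\gamma)$ --- a flat current supported on $Z_u$ --- vanishes, which uses the Minkowski-codimension hypothesis behind atomicity together with Federer's support theorem. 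The same objection applies to your opening reduction to $h=I$: the gauge change alters $B(u)$ by terms whose exterior derivatives could a priori charge $Z_u$, so the independence of $dB(u)$ from the metric also requires this transgression-plus-flat-support argument rather than being free. With these two points supplied, your argument becomes a complete proof, essentially reproducing the cited one.
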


Let  $E \longrightarrow M$ be a  hermitian vector bundle of rang $n$ over  a manifold $M$. We suppose that $E \longrightarrow M$ is equipped with a  connection $D$ and denote by $\omega$ the connection form; we denote by $\Omega$ the curvature matrix.

Clearly the Bochner-Martinelli potential  (\ref{BM2})  is not defined intrinsecly and depends on a choice of a local frame $f$. Moreover it works only for a flat connection. By contrary its exterior derivative $d B$ is intrensic in view of (\ref{BM3}).

An important result of  \cite{HL1} presents a construction of a global potential  connecting with the divisor $s$ by means the connection $D$ on $E$ i.e; satisfying the analog of the fundamental equation for the case of higher rank bundle. Note that in \cite{HL1} the connection $D$ is not supposed to be  hermitian.  

Let $(f)$ be a local frame in a trivialization of $E$. For a section $s$ we use the notation as above:  $s=uf$. Consider $f^{*}$ the co-frame of $f$ and the exterior algebra of $E^{*} \oplus E$ oriented by the volume form $f_1^{*}\wedge f_1\cdots\wedge f_n^{*}\wedge f_n:=\lambda (f)$. If  $h$ is the matrix of the metric in this trivialization and if we note : $u^{*}=h\bar u^t$, $\vert u \vert^2 = u u^*$, $Du=du+u\omega$ and $Du^{*}=du^{*}-\omega u^{*}$, where  $\omega$ is the connection matrix  in this local frame, the desired global potential form is given by the following expression:


 \begin{eqnarray}
\label{BM4}
\sigma\lambda(f) ={1\over n!}\left(i/ 2\pi\right)^nf^ {*}{u^{*}Du\over \vert u\vert^2}f\sum_{k=0}^{n-1}\left(f^ {*}\Omega f -f^ {*}{Du^{*} \wedge Du\over \vert u\vert^2}f\right)^k\wedge \left(f^*\Omega f\right)^{n-1-k},\end{eqnarray}
where  $\Omega$ is the curvature form of $D$.

As usual, denote by  $c_n(D)$  the n-th Chern class of $D$. Recall the following result  (\cite{HL1} Theorem 4.22) 

\begin{prop} The form  $\sigma$ defined in a local trivialization by (\ref{BM4}) is defined globally on $M$ and satisfies the fundamental equation 
\begin{eqnarray}
\label{BM5}
 d\sigma=c_n(D)-Div(s)
 \end{eqnarray}
\end{prop}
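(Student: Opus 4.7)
The plan is: first, check that (\ref{BM4}) defines a global form on $M \setminus Z$ by verifying its behaviour under a change of frame; second, prove $d\sigma = c_n(D)$ on $M \setminus Z$ by a transgression-style manipulation; third, extend the identity to all of $M$ as currents by using the atomicity of $s$ together with the already established local formula $dB(u) = Div(s)$ of (\ref{BM3}).

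For the first step I would analyse the gauge change $\tilde f = f a$ with $a : U \to GL(n, \C)$. This forces $\tilde u = u a^{-1}$ (from $s = u f = \tilde u \tilde f$), while $\omega$, $\Omega$ and the hermitian matrix $h$ transform by the standard conjugation/affine rules. The sandwich expressions $f^*\Omega f$, $f^*(u^* Du / |u|^2) f$ and $f^*(Du^* \wedge Du / |u|^2) f$ then represent endomorphism- or scalar-valued intrinsic forms because the left and right actions of $a$ compensate, and $\lambda(f)$ transforms by a determinantal factor that cancels against the normalisation inside. Hence $\sigma$ descends to a globally defined form on $M \setminus Z$.

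For the second step, on $M \setminus Z$ everything is smooth. Setting
$$ \eta = \frac{u^* Du}{|u|^2}, \qquad X = \Omega - \frac{Du^* \wedge Du}{|u|^2}, \qquad Y = \Omega, $$
the algebraic identity $(Y - X)\sum_{k=0}^{n-1} X^k Y^{n-1-k} = Y^n - X^n$, together with the Bianchi identity $D\Omega = 0$ and the computation of $d\eta$ (whose non-trivial part is $Y - X$ modulo terms that wedge to zero against $\eta$), should produce
$$ d\sigma = \frac{1}{n!}\left(\frac{i}{2\pi}\right)^n \det(\Omega) = c_n(D) $$
on $M \setminus Z$. This is the core Chern--Weil bookkeeping and is the place where the precise combinatorics of the sum $\sum_{k=0}^{n-1} X^k Y^{n-1-k}$ justifies the shape of (\ref{BM4}).

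For the third step, since $s$ is atomic all the $|u|^{-2}$-type singularities appearing in (\ref{BM4}) are pullbacks of forms covered by the atomicity definition, so $\sigma$ extends as an $L^1_{loc}$ current on $M$. Moreover, $\sigma$ differs from the Bochner--Martinelli form $B(u)$ of (\ref{BM1})--(\ref{BM2}) by connection-dependent corrections whose distributional exterior derivatives remain $L^1_{loc}$; hence the only distributional contribution of $d\sigma$ supported on $Z$ is the one produced by $\pm dB(u) = \pm Div(s)$ via (\ref{BM3}), with the sign dictated by Step 2. Combined with $d\sigma = c_n(D)$ on $M \setminus Z$ and the smoothness of $c_n(D)$, this yields the global current identity $d\sigma = c_n(D) - Div(s)$.

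The main obstacle will be Step 3: isolating the Bochner--Martinelli singularity within the $n$-fold wedge structure of (\ref{BM4}) and proving that the connection-dependent corrections, despite being built from several singular factors, produce no additional distributional current supported on $Z$. This requires combining the Lojasiewicz-type atomicity estimates with the algebraic identity used in Step 2 simultaneously on every summand, and is the technically most delicate point in the Harvey--Lawson machinery of \cite{HL1}.
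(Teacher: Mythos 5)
The paper offers no proof of this proposition: it is quoted as Theorem 4.22 of Harvey--Lawson \cite{HL1} (``Recall the following result''), so there is no in-paper argument to compare yours against. Judged on its own terms, your outline follows the standard Harvey--Lawson strategy --- gauge invariance of the sandwiched expressions, a Chern--Weil transgression off the zero set, and a comparison with the Bochner--Martinelli kernel of (\ref{BM1})--(\ref{BM3}) to capture the residue along $Z$ --- and the telescoping identity $(Y-X)\sum_{k}X^{k}Y^{n-1-k}=Y^{n}-X^{n}$ is indeed the combinatorial reason (\ref{BM4}) has the shape it has.

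As a proof, however, the proposal has two genuine gaps. First, in Step 2 the quantities $\eta$, $X$, $Y$ cannot be multiplied as naive matrix-valued forms: the expression (\ref{BM4}) lives in the exterior algebra of $E^{*}\oplus E$, and $\sigma$ is defined as the coefficient of the volume element $\lambda(f)$; the telescoping and the identification of the result with $\det(i\Omega/2\pi)=c_n(D)$ must be carried out in that algebra (this is where the factor $1/n!$ comes from), and your computation of $d\eta$ ``modulo terms that wedge to zero against $\eta$'' is only a heuristic at this point. Second, and more seriously, Step 3 --- showing that the connection-dependent corrections to $B(u)$, each of which carries several factors of $|u|^{-2}$, contribute no additional distributional mass supported on $Z$ --- is precisely the hard analytic content of the theorem; it requires the atomicity/flat-current calculus of \cite{HL1,HS}, and you explicitly leave it open. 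So what you have is a correct road map for the Harvey--Lawson argument rather than a proof; since the paper itself only cites the result, the honest options are either to cite \cite{HL1} as the authors do, or to carry out Steps 2 and 3 in full.
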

In the next subsection, we consider only the special case  where $M$ is equipped with an almost complex structure $J$ and a hermitian connection $D$ on $E$. We study the properties of this kernel.

\subsection{Estimates of the kernel}
 Our goal is to represent $d\sigma$ in a more explicit way by analogy with (\ref{BM1.1}). We assume additionly that a section $s$ is a submersion that is has the maximal rang $n$ in a neighborhood of its zero set $Z$. 

We denote again by $\langle, \rangle$ the canonical extension of the hermitian product to the exterior algebra. If $(e)$ is a local frame and $u \in A^p(M,E)$, $v \in A^q(M,E)$ are differential forms with $u = \sum_j u_j \otimes  e_j$, $v = \sum_k v_k \otimes e_k$, then 
$$\langle u, v \rangle = \sum_{j,k} u_j \wedge \bar v_k \langle e_j, e_k \rangle$$
Note that 
$$d \langle u, v \rangle = \langle D u, v \rangle + (-1)^p \langle u, D v \rangle$$
since the connection $D$ is hermitian.

We have the identities:
\begin{eqnarray*}
& &\partial_J \log \langle s, s \rangle = \frac{1}{\vert s \vert^2}\left ( \langle  \partial_{J,\omega} s, s \rangle + \langle s, \bar \partial_{J,\omega}s \rangle \right )\\ & & 
= \frac{1}{\vert s \vert^2}\langle Ds, s \rangle + \frac{1}{\vert s \vert^2} \left ( \langle s, \bar\partial_{J,\omega} s \rangle - \langle \bar\partial_{J,\omega} s, s \rangle \right)
\end{eqnarray*}

Now a direct computation gives
\begin{eqnarray}
\label{BM5}
d\partial_{J}\log \langle s, s\rangle = \tau_1 + d\tau_2
\end{eqnarray}
with 
\begin{eqnarray}
\label{BM6}
\tau_1 = \frac{1}{\vert s \vert^2}\left(\langle D^2 s, s \rangle - \langle Ds, Ds \rangle \right ) - 
\frac{\langle Ds,s  \rangle}{\vert s \vert^4}\left(\langle Ds,s \rangle + \langle s, Ds \rangle \right )
\end{eqnarray}
and 
\begin{eqnarray}
\label{BM7}
\tau_2 = \frac{1}{\vert s \vert^2} \left ( \langle s, \bar\partial_{J,\omega} s \rangle - \langle \bar\partial_{J,\omega} s, s \rangle \right)
\end{eqnarray}


We have:
\begin{eqnarray*}
& &\partial_{J}\log \vert s\vert ^2\wedge (d\partial_{J}(\log \vert s\vert^2))^{n-1}  = \partial_{J}\log\vert s\vert^2\wedge\sum_{k=0}^{n-1}C^{k}_{n-1}\tau_{1}^k\wedge (d\tau_{2})^{n-1-k} =\\
& & {\langle Ds,s\rangle \over \vert s\vert^2}\wedge \tau_{1}^{n-1}+\tau_{2}\wedge\tau_{1}^{n-1}+ \left({\langle Ds,s\rangle\over \vert s\vert^2}+\tau_ {2}\right)\wedge\sum_{k=0}^{n-2}C^{k}_{n-1}\tau_{1}^k\wedge (d\tau_{2})^{n-1-k}
\end{eqnarray*}
We study the  currents appearing in  this expansion and proceed in several steps.
\medskip

(1)  Note that $\langle Ds, s \rangle \wedge \langle Ds, s \rangle = 0$.  Hence it follows from (\ref{BM6}) that  
\begin{eqnarray*}
{\langle Ds,s\rangle\over \vert s\vert^2}\wedge \tau_{1}^{n-1} = {\langle Ds,s\rangle\over \vert s\vert^{2n}}\sum_{k=0}^{n-1}(-1)^{n-1-k}C^{k}_{n-1}\langle D^2s,s\rangle^k\wedge\langle Ds,Ds\rangle^{n-1-k}
\end{eqnarray*}
Fix an orthonormal basis $(f)$ in a local trivializatin such that $s=uf$. Recall that we have the identities  $$\langle Ds,s\rangle =du\bar u^{t}+u\omega\bar u^{t}, \,\,\ \langle D^2s,s\rangle =u\Omega\bar u^t$$ and
 $$\langle Ds,Ds\rangle =du \wedge d\bar u^t-du \wedge \omega \bar u^t+ u\omega \wedge d\bar u^t-u\omega \wedge \omega\bar u^t$$ 
 This leads to the following representation: 
$$d\left({\langle Ds,s\rangle\over \vert s\vert^2}\wedge \tau_{1}^{n-1}\right)=d(\beta\wedge (d\beta)^{n-1})+ T$$
where  $\beta$ is defined by (\ref{BM1}) with $h = Id$ and  $T$ is a current in  $L^1_{loc}$. Therefore 
\begin{eqnarray}
\label{BM8}
d\left({[Ds,s]\over \vert s\vert^2}\wedge \tau_{1}^{n-1}\right)=(2\pi i)^nDiv(s)+T
\end{eqnarray}
in the above trivialization. 
\medskip

(2) For every compact subset $K$ in  $M$ and every test form  $\phi$ with support in  $K$ there exists a constant  $C_K > 0$ such that  : 
\begin{eqnarray*}
\vert \left\langle d(\tau_2\wedge\tau_ 1^{n-1}),\phi\right\rangle\vert \leq C_K\left(\vert\vert \bar\partial_{J,\omega}s\vert\vert_{\infty}\vert\vert d\phi\vert\vert_{\infty}\right).
\end{eqnarray*}
A similar estimate holds for the terms d$\big(\tau_ {2}\wedge\sum_{k=0}^{n-2}C^{k}_{n-1}\tau_{1}^k\wedge (d\tau_{2})^{n-1-k}\big).$
\medskip

(3) Finally, we have  
\begin{eqnarray*}
& &\vert\langle d \left(\frac{\langle Ds,s\rangle}{ \vert s\vert^2}\wedge\sum_{k=0}^{n-2}C^{k}_{n-1}\tau_{1}^k\wedge (d\tau_{2})^{n-1-k}\right),\phi\rangle\vert\\
& &\leq C_K\left(\vert\vert \bar\partial_{J,\omega}s\vert\vert_{\infty}\vert\vert d\phi\vert\vert_{\infty}+\vert\vert \phi\vert\vert_{\infty}\right).
\end{eqnarray*}
We put 
\begin{eqnarray}
\label{BM9}
\left(1/2\pi i\right)^n(d\partial_{J}\log\vert s\vert^2)^{n}:=\left(1 /2\pi i\right)^n d\left(\partial_{J}\log\vert s\vert ^2\wedge (d\partial_{J}\log\vert s\vert^2)^{n-1}\right)
\end{eqnarray}

We obtain the following 

\begin{theorem} 
\label{ThBM10}
We have 
\begin{eqnarray*}
\left(1/ 2\pi i\right)^n\left(d\partial_{J}\log\vert s\vert^2\right)^{n}=Div(s)-c_n(D)+d\gamma
\end{eqnarray*} where the current  $d\gamma$ satisfies the following estimate on every compact $K$ :
\begin{eqnarray}
\label{BM10}
\vert(d\gamma,\phi)\vert\leq C_K(\vert\vert \bar\partial_{J,\omega}s\vert\vert_{\infty}\vert\vert d\phi\vert\vert_{\infty}+\vert\vert \phi\vert\vert_{\infty}).
\end{eqnarray}
\end{theorem}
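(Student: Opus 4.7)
The plan is to combine the wedge expansion displayed just before step~(1) with the three local estimates already established in~(1)--(3), and then package the result by invoking the Harvey--Lawson fundamental equation $d\sigma=c_n(D)-Div(s)$.

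First, I would substitute (\ref{BM5})--(\ref{BM7}) into the definition (\ref{BM9}). Writing $\partial_J\log|s|^2=\frac{\langle Ds,s\rangle}{|s|^2}+\tau_2$ and $d\partial_J\log|s|^2=\tau_1+d\tau_2$, a binomial expansion reproduces exactly the three-piece splitting displayed above step~(1): a \emph{principal} piece $\frac{\langle Ds,s\rangle}{|s|^2}\wedge\tau_1^{n-1}$, a \emph{mixed} piece $\tau_2\wedge\tau_1^{n-1}$, and a \emph{remainder} $\bigl(\tfrac{\langle Ds,s\rangle}{|s|^2}+\tau_2\bigr)\wedge\sum_{k=0}^{n-2}\binom{n-1}{k}\tau_1^{k}\wedge(d\tau_2)^{n-1-k}$ in which every summand carries at least one factor of $d\tau_2$.

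Next, I would apply $d$ to each piece and interpret the result. For the principal piece, step~(1) gives, in any local trivialization,
\begin{equation*}
d\!\left(\frac{\langle Ds,s\rangle}{|s|^2}\wedge\tau_1^{n-1}\right)=(2\pi i)^n\,Div(s)+T,\qquad T\in L^1_{loc}.
\end{equation*}
Since $\frac{\langle Ds,s\rangle}{|s|^2}\wedge\tau_1^{n-1}$ is in fact a global $L^1_{loc}$ form built from the hermitian inner product, and since Harvey--Lawson's potential $\sigma$ from (\ref{BM4}) is also a global $L^1_{loc}$ form satisfying the fundamental equation $d\sigma=c_n(D)-Div(s)$, the difference defines a globally well-defined $L^1_{loc}$ current $\gamma_1$ with
\begin{equation*}
\left(\frac{1}{2\pi i}\right)^n d\!\left(\frac{\langle Ds,s\rangle}{|s|^2}\wedge\tau_1^{n-1}\right)=Div(s)-c_n(D)+d\gamma_1.
\end{equation*}
This is the step at which the topological term $c_n(D)$ enters. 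For the mixed and remainder pieces, after applying $d$ I would invoke estimates~(2) and~(3) directly; they produce bounds of the form $C_K\bigl(\|\bar\partial_{J,\omega}s\|_\infty\|d\phi\|_\infty+\|\phi\|_\infty\bigr)$ on any compact $K$ containing $\supp\phi$. Bundling these errors together with $d\gamma_1$ from the principal piece into a single current $d\gamma$, the target estimate (\ref{BM10}) follows.

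The main obstacle is the identification of $\tfrac{\langle Ds,s\rangle}{|s|^2}\wedge\tau_1^{n-1}$ with a multiple of Harvey--Lawson's potential $\sigma$ up to a \emph{globally} defined $L^1_{loc}$ correction. Locally one matches coefficients using $\langle D^2s,s\rangle=u\Omega\bar u^t$, $\langle Ds,Ds\rangle=Du\wedge Du^*$, and the identity $\langle Ds,s\rangle\wedge\langle Ds,s\rangle=0$ (it is a $1$-form) to collapse $\tau_1^{n-1}$ to $\bigl(\tfrac{\langle D^2s,s\rangle-\langle Ds,Ds\rangle}{|s|^2}\bigr)^{n-1}$. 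Matching this against the intricate combination of $\Omega$ and $Du^*\wedge Du/|u|^2$ appearing in (\ref{BM4}) requires careful bookkeeping of trivialization-dependent pieces that nevertheless glue into a global $L^1_{loc}$ form; once this is done, the equation is a direct consequence of step~(1) combined with the Harvey--Lawson formula.
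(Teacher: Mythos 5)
Your proposal is correct and follows essentially the same route as the paper: expand $\partial_J\log|s|^2\wedge(d\partial_J\log|s|^2)^{n-1}$ via the splitting $\tau_1+d\tau_2$, identify the principal term with the Bochner--Martinelli/Harvey--Lawson potential to produce $Div(s)-c_n(D)$ up to a globally defined $L^1_{loc}$ error, and control the remaining terms by the estimates in steps (2) and (3). Your explicit use of the fundamental equation $d\sigma=c_n(D)-Div(s)$ to extract $c_n(D)$ and to guarantee global exactness of the error is exactly the step the paper leaves implicit.
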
 
We consider now some special cases of Theorem \ref{ThBM10}.

\subsection{Some special cases}
We begin with the following 
\begin{cor}
\label{corBM11} Let $s$ be an atomic section and  $\bar\partial_{J,\omega}s = 0 $ on $M$. then
\begin{eqnarray*}
\left(1/2\pi i\right)^n \left(d\partial_{J}\log\vert s\vert ^2\right)^n=Div(s)-c_n(D)+d\gamma
\end{eqnarray*} where $d\gamma$ is in  $L^{1}_{loc}$.
\end{cor}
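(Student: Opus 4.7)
The plan is to apply Theorem \ref{ThBM10} and inspect its proof under the extra hypothesis $\bar\partial_{J,\omega}s = 0$. First observe that the form $\tau_2$ from (\ref{BM7}) reads
$$\tau_2 = \frac{1}{|s|^2}\bigl(\langle s,\bar\partial_{J,\omega}s\rangle - \langle\bar\partial_{J,\omega}s,s\rangle\bigr),$$
so $\tau_2 \equiv 0$ on $M$ under our assumption. In the three-step expansion of $\partial_J\log|s|^2\wedge(d\partial_J\log|s|^2)^{n-1}$ carried out in the proof of Theorem \ref{ThBM10}, this kills every contribution carrying a factor of $\tau_2$ or $d\tau_2$, so only the first piece (labelled (1) in that proof) survives.

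The next step is to reopen (1) and show that its residual current $T$, defined locally by
$$d\!\left(\frac{\langle Ds,s\rangle}{|s|^2}\wedge \tau_1^{n-1}\right) = d\bigl(\beta\wedge (d\beta)^{n-1}\bigr) + T,$$
is in fact in $L^1_{loc}$ (the theorem only gave the weaker order-zero estimate on the full $d\gamma$). In a local orthonormal trivialization $s = uf$, using
$$\langle Ds,s\rangle = du\,\bar u^t + u\omega\bar u^t, \qquad \langle D^2s,s\rangle = u\Omega\bar u^t,$$
$$\langle Ds,Ds\rangle = du\wedge d\bar u^t - du\wedge\omega\bar u^t + u\omega\wedge d\bar u^t - u\omega\wedge\omega\bar u^t,$$
one identifies the leading Bochner--Martinelli piece $\beta\wedge(d\beta)^{n-1}$, with $\beta = du\,\bar u^t/|u|^2$, as exactly the part of the expansion that does \emph{not} involve the smooth forms $\omega$ or $\Omega$. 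Consequently every term making up $T$ carries at least one factor of $\omega$ or $\Omega$, hence a strictly smaller total degree in the logarithmic factors $du/u$, $d\bar u/\bar u$ than the top-degree piece $\beta\wedge(d\beta)^{n-1}$. The atomicity hypothesis on $s$ then guarantees that each such reduced combination extends across $Z_s$ as an $L^1_{loc}$ form.

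Finally, combining the local Bochner--Martinelli identity (\ref{BM3}), namely $d(\beta\wedge(d\beta)^{n-1}) = (2\pi i)^n\,Div(s)$, with the global formula
$$\bigl(1/2\pi i\bigr)^n(d\partial_J\log|s|^2)^n = Div(s) - c_n(D) + d\gamma$$
from Theorem \ref{ThBM10}, one deduces $d\gamma = (2\pi i)^{-n}T + c_n(D)$, which lies in $L^1_{loc}$ since $T$ does and $c_n(D)$ is smooth; the property being local, this concludes the proof. The main obstacle is the bookkeeping in the middle paragraph: verifying that each term contributing to $T$ genuinely carries at least one smooth factor from $\omega$ or $\Omega$, so that its logarithmic singularity stays within the range where atomicity yields $L^1_{loc}$ extension.
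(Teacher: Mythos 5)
Your argument is correct and follows essentially the same route as the paper: its proof of this corollary consists precisely of observing that $\tau_2$ vanishes identically (so only the piece labelled (1) in the proof of Theorem \ref{ThBM10} survives) and invoking the $L^1_{loc}$ assertion for the residual current $T$ in (\ref{BM8}). Your middle paragraph simply fills in the bookkeeping behind that $L^1_{loc}$ claim, which the paper states without detailed justification.
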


\begin{proof} We repeat the previous argument and observe that  $\tau_2$ vanishes identically. Here we can assume that $s$ is just atomic.\end{proof}
\medskip

If furthermore a connection is flat, we obtain:
\begin{cor} Assume that  $D$ is flat and  $s$ is a holomorphic section. Then
\begin{eqnarray*}
 \left(1/ 2\pi i\right)^n \left(\bar\partial_{J} \partial_{J}\log \vert s\vert ^2\right)^n=Div(s).
 \end{eqnarray*}
\end{cor}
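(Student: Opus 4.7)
The plan is to invoke Corollary~\ref{corBM11} and then show that under the additional hypotheses both the Chern term and the error term vanish. Since $D$ is flat, its curvature matrix $\Omega$ vanishes identically, hence the Chern form $c_n(D) = \det(i\Omega/2\pi)$ is the zero form. Since $s$ is holomorphic, $\overline\partial_{J,\omega}s = 0$, so the conclusion of Corollary~\ref{corBM11} reduces to
\begin{eqnarray*}
(1/2\pi i)^n(d\partial_J \log\langle s,s\rangle)^n = \mathrm{Div}(s) + d\gamma,
\end{eqnarray*}
with $d\gamma \in L^{1}_{loc}$. The two remaining tasks are to verify that $d\gamma = 0$ and to replace $d\partial_J$ by $\overline\partial_J \partial_J$.

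To kill $d\gamma$ I would reopen the proof of Theorem~\ref{ThBM10} rather than use it as a black box. The error $d\gamma$ is accumulated from the remainder $T$ in step~(1) together with the contributions of $\tau_2$ appearing in steps~(2), (3). Under $\overline\partial_{J,\omega}s = 0$ we have $\tau_2 = 0$, so only $T$ survives. Working in a local flat (parallel) frame, available since $D$ is flat, the connection matrix becomes $\omega \equiv 0$ and the identities invoked in step~(1) collapse to $\langle Ds,s\rangle = du\,\overline u^t$, $\langle D^2s,s\rangle = 0$, $\langle Ds,Ds\rangle = du\wedge d\overline u^t$. Consequently $\frac{\langle Ds,s\rangle}{|s|^2}\wedge \tau_1^{n-1}$ coincides exactly with the Bochner--Martinelli form $\beta\wedge(d\beta)^{n-1}$ from (\ref{BM1}), and the extra current $T$ is identically zero. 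Thus $d\gamma$ vanishes in this frame, and being intrinsic it vanishes globally.

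Finally I would pass from $d\partial_J$ to $\overline\partial_J \partial_J$. The section $s$ is holomorphic and, since its divisor is nontrivial, not identically zero; in the flat frame above its components $u_j$ are scalar $J$-holomorphic functions, at least one of which is non-constant near $Z$. By the remark following (\ref{CRscalar}), existence of a non-constant scalar $J$-holomorphic function forces $J$ to be integrable in a neighborhood of $Z$, and then globally on the open set where $s$ is nonzero. With $J$ integrable, $\partial_J^2 = 0$ and the type $(-1,2)$ component $\overline A_J$ of $d$ vanishes, so $d\partial_J f = \overline\partial_J\partial_J f$ for any smooth function $f$, in particular for $f = \log\langle s,s\rangle$ on $M\setminus Z$. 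Plugging this in finishes the proof.

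The main obstacle is the second paragraph: the identity $d\gamma = 0$ does not follow from Corollary~\ref{corBM11} as stated, whose bound only controls $|\langle d\gamma,\phi\rangle|$ by $\|\phi\|_\infty$, so the vanishing must be extracted from the explicit form of the terms in step~(1) of the proof of Theorem~\ref{ThBM10} evaluated in a flat frame.
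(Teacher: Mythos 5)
The final step of your argument contains a genuine error. From the remark following (\ref{CRscalar}) you infer that the existence of a single non-constant scalar $(J,J_{st})$-holomorphic function forces $J$ to be integrable near $Z$. This implication is false: that remark is a genericity statement (for a generic non-integrable $J$ the system (\ref{CRscalar}) has only constant solutions), whereas integrability requires a full set of $\dim_{\C}M$ functionally independent holomorphic coordinates. An almost complex structure admitting one, or even $n<\dim_{\C}M$, non-constant holomorphic functions need not be integrable (take a product of a complex curve with a non-integrable almost complex manifold; the projection to the curve is holomorphic). If your reduction were valid the corollary would be vacuous precisely in the almost complex situation it is meant to cover. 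The passage from $d\partial_{J}\log\vert s\vert^{2}$ to $\overline\partial_{J}\partial_{J}\log\vert s\vert^{2}$ has to be justified differently, and in fact more cheaply: in a parallel frame $\omega=0$ and $h$ is constant, holomorphy gives $\overline\partial_{J}u=0$, hence $du=\partial_{J}u$ is of type $(1,0)$ and $d\bar u=\overline\partial_{J}\bar u$ is of type $(0,1)$; therefore $\partial_{J}\log\vert s\vert^{2}=\beta$ with $\beta$ as in (\ref{BM1}), and every term of $d\beta$ contains exactly one factor $du$ and one factor $d\bar u$ (the term $(du\,h\bar u^{t})\wedge(du\,h\bar u^{t})$ being zero), so $d\beta$ is purely of bidegree $(1,1)$. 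Thus $d\partial_{J}\log\vert s\vert^{2}=\overline\partial_{J}\partial_{J}\log\vert s\vert^{2}$ holds here with no integrability hypothesis on $J$.

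Beyond this, your route is more roundabout than the paper's. The paper does not pass through Theorem \ref{ThBM10} or Corollary \ref{corBM11} at all: it fixes a parallel frame, identifies $\partial_{J}\log\vert s\vert^{2}\wedge(\overline\partial_{J}\partial_{J}\log\vert s\vert^{2})^{n-1}$ with the Bochner--Martinelli form $\beta\wedge(d\beta)^{n-1}$, and concludes directly from the local identity (\ref{BM3}). Your verification that $\tau_{2}=0$ and that the remainder $T$ of step (1) vanishes in a flat frame amounts to the same computation, so once the integrability claim is replaced by the type argument above, your proof collapses onto the paper's; as written, however, the gap is real.
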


\begin{proof} Fix a local (not necessarily hermitian) frame $(f)$ parallel for the connection. Then $s = u f$ and 
 $\langle Ds,s\rangle$, $\langle Ds,Ds\rangle$ and $\langle D^2s,s\rangle$ can be written in this base. One easily obtain that 
 \begin{eqnarray*}
\left(1/2\pi i\right)^n \partial_{J}\log \vert s\vert^2\wedge(\bar\partial_{J} \partial_{J}\log \vert s\vert ^2)^{n-1} = \left(1/2\pi i\right)^n\beta\wedge (d\beta)^{n-1}
\end{eqnarray*} with  $\beta$ given by (\ref{BM1}). So we apply (\ref{BM2}). \end{proof}
\medskip

\begin{cor} 
\label{CompMan}
Suppose that $s$ is a regular (i.e. with the non-degenerate differential) section of an almost hermitian vector bundle. Furthermore assume that $\bar\partial_{J,\omega}s=0$ along $Z$. Then
\begin{eqnarray*}
\left(1/2\pi i\right)^n (\bar\partial_{J} \partial_{J}(\log \vert s\vert ^2)^n=Div(s)-c_n(D)+\theta
\end{eqnarray*}
where $\theta$ is in $L^{1}_{loc}(X)$.
\end{cor}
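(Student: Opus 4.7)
My plan is to deduce the statement from Theorem~\ref{ThBM10} by separating the task into two $L^1_{loc}$-regularity claims. First, I will upgrade the error current $d\gamma$ of Theorem~\ref{ThBM10} to a locally integrable form under the present stronger hypothesis. Second, I will show that the difference $(d\partial_J\log|s|^2)^n-(\bar\partial_J\partial_J\log|s|^2)^n$ is also locally integrable, the correction being controlled by Nijenhuis contributions stemming from the non-integrability of $J$.

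For the first step, regularity of $s$ yields $|s|\sim\dist(\cdot,Z)$ in a neighborhood of $Z$, and the vanishing of $\bar\partial_{J,\omega}s$ along $Z$ gives, by Taylor expansion, $|\bar\partial_{J,\omega}s|\le C|s|$ there. Consequently $\tau_2$ from (\ref{BM7}) is bounded and $d\tau_2=O(1/|s|)$ near $Z$, while $\tau_1$ from (\ref{BM6}) satisfies $\tau_1=O(1/|s|^2)$ and $d\tau_1=O(1/|s|^3)$. Each term composing $d\gamma$ in the proof of Theorem~\ref{ThBM10}, namely $d(\tau_2\wedge\tau_1^{n-1})$ and the analogous contributions from steps~(2) and~(3) of that proof, therefore has coefficients of order at most $O(1/|s|^{2n-1})$; since $Z$ has real codimension $2n$, such functions are locally integrable across $Z$, so $d\gamma\in L^1_{loc}$.

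For the second step, although $d=\partial_J+\bar\partial_J$ on functions, on a $(1,0)$-form $d$ also produces a $(0,2)$-component coming from a $C^{\infty}$-linear contraction with the Nijenhuis tensor, and on a $(0,1)$-form analogously a $(2,0)$-component. The identity $d^{2}\log|s|^{2}=0$ combined with a bidegree decomposition gives
$$d\partial_J\log|s|^2 = \bar\partial_J\partial_J\log|s|^2 + R^{(2,0)} + R^{(0,2)},$$
with $R^{(2,0)}$ zeroth-order in $\bar\partial_J\log|s|^2$ and $R^{(0,2)}$ zeroth-order in $\partial_J\log|s|^2$, both contracted with the smooth Nijenhuis tensor. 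Taking the $n$-th power and projecting onto the top bidegree $(n,n)$ forces the $R^{(2,0)}$ and $R^{(0,2)}$ factors to occur in equal numbers $i\ge 1$, yielding
$$(d\partial_J\log|s|^2)^n - (\bar\partial_J\partial_J\log|s|^2)^n = \sum_{i=1}^{\lfloor n/2\rfloor} c_i\,(\bar\partial_J\partial_J\log|s|^2)^{n-2i}\wedge (R^{(2,0)})^i\wedge (R^{(0,2)})^i$$
for suitable combinatorial constants $c_i$. Since $\partial_J\log|s|^2=O(1/|s|)$ and $\bar\partial_J\partial_J\log|s|^2=O(1/|s|^2)$, each summand has coefficient of order $O(1/|s|^{2n-2i})$ with $i\ge 1$, hence lies in $L^1_{loc}$. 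Combining the two steps and substituting into Theorem~\ref{ThBM10} gives the asserted identity with $\theta\in L^1_{loc}$.

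The main technical obstacle is the singular-order bookkeeping: one must verify that every correction term contains either a factor $d\tau_2=O(1/|s|)$ or at least one Nijenhuis factor $R^{(2,0)},R^{(0,2)}$, so that the resulting coefficient singularity is strictly milder than the critical threshold $1/|s|^{2n}$ needed for local integrability across the codimension-$2n$ set $Z$.
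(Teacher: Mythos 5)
The paper states this corollary with no proof at all, so there is nothing to compare against line by line; the only guidance it gives is the one-line proof of Corollary \ref{corBM11} (where $\tau_2\equiv 0$) and the line-bundle discussion after Theorem \ref{Prop_LP5}, where the hypothesis that $s$ is regular and $\bar\partial$-closed along $Z$ is converted into precisely your key estimate ($\bar\partial_J\sigma=\sigma h_1+\bar\sigma h_2$, i.e. $\vert\bar\partial_{J,\omega}s\vert\le C\vert s\vert$). Your Step 1 is therefore the intended argument, and your order bookkeeping is correct: $\vert s\vert\sim\dist(\cdot,Z)$ by regularity, $\tau_2=O(1)$, $d\tau_2=O(1/\vert s\vert)$, $\tau_1=O(1/\vert s\vert^2)$, so each error term from steps (1)--(3) of the proof of Theorem \ref{ThBM10} is $O(\vert s\vert^{-(2n-1)})$ and hence integrable across the codimension-$2n$ submanifold $Z$.

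Two points in Step 2 need repair. First, the claim that projecting onto ``the top bidegree $(n,n)$'' forces $R^{(2,0)}$ and $R^{(0,2)}$ to occur in equal numbers presupposes $\dim_{\C}M=n$; the rank $n$ of $E$ is not assumed to equal $\dim_\C M$ (the paper explicitly allows $n\le m$), and for $\dim_\C M>n$ the mixed-bidegree components of a $2n$-form do not vanish, so the sum you write down is not the whole correction. The pairing is in fact unnecessary: in the multinomial expansion of $\bigl(\bar\partial_J\partial_J\log\vert s\vert^2+R^{(2,0)}+R^{(0,2)}\bigr)^n$ every term other than $(\bar\partial_J\partial_J\log\vert s\vert^2)^n$ replaces at least one factor of size $O(1/\vert s\vert^2)$ by a factor $R^{(\cdot,\cdot)}=O(1/\vert s\vert)$ (note $\bar\partial_J\log\vert s\vert^2=O(1/\vert s\vert)$ as well, since $\langle s,\partial_{J,\omega}s\rangle/\vert s\vert^2$ is only $O(1/\vert s\vert)$), so it is $O(\vert s\vert^{-(2n-1)})$ and locally integrable; this simpler count should replace the pairing argument. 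Second, both powers are currents defined through the device (\ref{BM9}), i.e. as $d$ of an $L^1_{loc}$ $(2n-1)$-form, so passing from your pointwise estimates on $M\setminus Z$ to an identity of currents requires checking that no additional residue supported on $Z$ is created when you differentiate the $L^1_{loc}$ difference $\partial_J\log\vert s\vert^2\wedge\bigl((d\partial_J\log\vert s\vert^2)^{n-1}-(\bar\partial_J\partial_J\log\vert s\vert^2)^{n-1}\bigr)=O(\vert s\vert^{-(2n-2)})$. Since $Z$ is a regular codimension-$2n$ submanifold, the boundary integral over the $\varepsilon$-tube around $Z$ is $O(\varepsilon^{2n-1}\cdot\varepsilon^{-(2n-2)})=O(\varepsilon)\to 0$, so the residue vanishes; this tube argument should be stated explicitly. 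With these two adjustments your proof is complete and is, as far as one can tell, the argument the authors intend.
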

 This generalizes the result of  \cite{EL} which considers the case of the trivial bundle with flat connection.

Consider now the case of a hermitian vector bundle over a compact Kahler manifold. Of course,  we use the Chern connection. Let $s$ be an atomic section. Then Corollary \ref{corBM11} implies
\begin{eqnarray*}
\left(1/2\pi i\right)^n (\bar\partial_{J} \partial_{J}\log \vert s\vert ^2)^n=Div(s)-c_n+d\gamma
\end{eqnarray*}
and $d\gamma$ is in $L^1_{loc}$. On a compact Kahler manifold a d-exact form (current) of  type $(p>1,q>1)$ is  also  $\partial\bar\partial$-exact (see \cite{DE}). Since $d\gamma$ is a $(n,n)$ form (recall that  $Div(s)$ is a  $(n,n)$ form as well as the class $c_n$ ) it can be written in the form  $\partial\bar\partial G$. We obtain the following Green type formula: 
\begin{cor} We have
\begin{eqnarray*}
\left(1/2\pi i\right)^n (\bar\partial_{J} \partial_{J}\log \vert s\vert ^2)^n=Div(s)-c_n+\partial\bar\partial G
\end{eqnarray*} 
where $\partial\bar\partial \Omega$ is in  $L^1_{loc}$.
\end{cor}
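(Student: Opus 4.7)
The plan is to derive this formula directly from Corollary~\ref{corBM11} by invoking the global $\partial\bar\partial$-lemma on a compact K\"ahler manifold. First I would apply Corollary~\ref{corBM11} with $D$ taken to be the Chern connection on the holomorphic hermitian bundle $E \to M$. Since $s$ is atomic and, in the compact K\"ahler context under discussion, holomorphic, the hypothesis $\bar\partial_{J,\omega} s = 0$ of that corollary is satisfied. This yields the identity
\[
(1/2\pi i)^n(\bar\partial_{J}\partial_{J} \log |s|^2)^n = Div(s) - c_n + d\gamma
\]
with $d\gamma \in L^1_{loc}$. The remaining task is simply to replace $d\gamma$ by an expression of the form $\partial\bar\partial G$.

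The next step is a bidegree count: $Div(s)$ is the integration current along the codimension-$n$ analytic cycle $Z = s^{-1}(0)$ and so has pure bidegree $(n,n)$; the Chern class representative $c_n$ is a smooth $(n,n)$-form; and the left-hand side $(\bar\partial_{J}\partial_{J} \log|s|^2)^n$ is manifestly of pure bidegree $(n,n)$. Consequently $d\gamma$ is a $d$-exact current of pure bidegree $(n,n)$ with $L^1_{loc}$ coefficients.

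I would then invoke the $\partial\bar\partial$-lemma for currents on a compact K\"ahler manifold, as recalled in~\cite{DE}: every $d$-exact current of pure bidegree $(p,q)$ with $p,q \geq 1$ can be written as $\partial\bar\partial G$ for some current $G$ of bidegree $(p-1,q-1)$. Applied to $d\gamma$ this produces $G$ satisfying $\partial\bar\partial G = d\gamma$, and substitution into the previous identity gives the claimed formula. The relation $\partial\bar\partial G = d\gamma \in L^1_{loc}$ is then automatic. The only subtle point, and the place where one should be careful, is that the $\partial\bar\partial$-lemma is applied at the level of currents with merely $L^1_{loc}$ regularity rather than to a smooth form; but this is precisely the classical currents version of the lemma on a compact K\"ahler manifold, so no additional estimates or regularization are needed.
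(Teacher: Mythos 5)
Your proposal matches the paper's own argument: apply Corollary \ref{corBM11} with the Chern connection (so $\bar\partial_{J,\omega}s=\bar\partial s=0$ for a holomorphic atomic section), observe that $d\gamma$ is a $d$-exact current of bidegree $(n,n)$ since $Div(s)$ and $c_n$ both are, and invoke the $\partial\bar\partial$-lemma for currents on a compact K\"ahler manifold to write $d\gamma=\partial\bar\partial G$. This is essentially identical to the proof in the paper, including the final remark that the lemma is being applied at the level of currents.
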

This result implies that the divisor of $s$ represents the Chern class of the hermitian bundle in the Bott-Chern cohomology. The formula in the last corollary was well known in a more general setting : for complex manifold in general and for non atomic holomorphic sections (see \cite{ME1}, \cite{ME2}, \cite{AND}). But the proofs use the resolution of singularities theorem wich is not the case here : the theory of singular connections is much more elementary.    


\subsection{Currents on Levi-flat CR manifolds}
Lelong-Poincar\'e type formulae obtained in the previous subsection could be used in study of currents supported on Levi-flat CR manifolds. These objects often arise in the theory of foliations. We consider the most natural case of Levi-flat hypersurfaces in an almost complex manifold of dimension 2 since foliations of such type exist (at least locally) for any almost complex structure. Our result and method of proof are similar to the "second support theorem" for currents due to Demailly \cite{D}.

Let $(M,J)$ be an almost complex manifold of (complex) dimension 2. A $C^1$-smooth closed real submanifold $\Gamma \subset M$ of codimension 1 is called a 
{\it Levi-flat hypersurface} if it is foliated near every point by a real 1-parameter family of $J$-complex discs. Such a foliation often is called {\it the Levi foliation} of $\Gamma$.

\begin{prop}
\label{SecStrTh}
Let $\Gamma$ be a Levi-flat hypersurface in an almost complex manifold $(M,J)$ of complex dimension 2 and $\gamma$ be a $C^1$ smooth real curve. Suppose that there exists a $C^1$ submersion $F:\Gamma \to \gamma$ such that every fiber $L_t = F^{-1}(t)$, $t \in \gamma$ is a connected $J$-complex curve in 
$\Gamma$ i.e. $L_t$ is a leaf of the Levi foliation. Suppose that $T \in {\mathcal D}_{1,1}'(M)$ is a closed  current of order $0$ with $Supp T \subset \Gamma$. Then there exists a unique complex measure $\nu$ on $\gamma$ such that 
\begin{eqnarray}
\label{lam1}
T = \int_{t \in \gamma} [L_t] d\nu(t)
\end{eqnarray}
The current $T$ is positive if and only if the measure $\nu$ is positive.
\end{prop}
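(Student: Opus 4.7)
The plan is to localize near an arbitrary point $p\in \Gamma$, extract the pointwise structure of $T$ by combining the support condition with $dT=0$, and then recognize the result as an integral against a measure on $\gamma$. First I would choose local coordinates $(z_1, z_2 = x_2+iy_2)$ on $M$ near $p$ so that $\Gamma = \{y_2=0\}$, $L_t = \{y_2=0,\, x_2=t\}$, and $F=x_2$; by a linear change I arrange $J(p)=J_{\mathrm{st}}$. This is possible because each leaf $L_t$, being a $2$-real-dimensional $J$-complex curve, carries an integrable complex structure, hence a $J$-holomorphic coordinate $z_1$ on $L_t$ varying smoothly in $t$. In these adapted coordinates the matrix $\mu$ of $J$ from (\ref{FormBasis}) satisfies $\mu_{1k}|_{\Gamma}=0$, so that $\alpha_1|_{\Gamma}=dz_1$, which keeps later computations close to the integrable case.

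Next I extract the pointwise form of $T$. Since $\mathrm{Supp}\,T\subset \Gamma$ we have $y_2\cdot T = 0$, and differentiating and using $dT=0$ gives $dy_2\wedge T=0$; splitting by bidegree yields the two independent conditions
\[
\partial_J y_2\wedge T = 0, \qquad \bar\partial_J y_2\wedge T = 0.
\]
At each point of $\Gamma$ these cut out a $1$-dimensional subspace of $\Lambda^{1,1}$, spanned by $\partial_J y_2\wedge\bar\partial_J y_2$. At $p$, where $J=J_{\mathrm{st}}$, this reads $T = T_{2\bar 2}\,dz_2\wedge d\bar z_2$ with $T_{2\bar 2}$ an order-$0$ distribution supported in $\Gamma$, so $T_{2\bar 2}=\tilde\mu\otimes\delta(y_2)$ for some complex Radon measure $\tilde\mu$ on $\Gamma$.

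Substituting $T = T_{2\bar 2}\,dz_2\wedge d\bar z_2$ back into $dT=0$, only the $dz_1$ and $d\bar z_1$ components of $dT_{2\bar 2}\wedge dz_2\wedge d\bar z_2$ survive (the $dx_2$ and $dy_2$ components kill themselves against the repeated $dz_2, d\bar z_2$), yielding $\partial_{z_1}\tilde\mu = \partial_{\bar z_1}\tilde\mu = 0$ as distributions on $\Gamma$. A Radon measure annihilated by both $\partial_{z_1}$ and $\partial_{\bar z_1}$ is translation-invariant along the leaves, hence of the form $\tilde\mu = \lambda_{z_1}\otimes\nu$ with $\lambda_{z_1}$ the Lebesgue measure on each leaf and $\nu$ a complex measure on $\gamma$. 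Direct comparison with the coordinate expression of $[L_t]$ then identifies $T$ with $\int_\gamma [L_t]\,d\nu(t)$ in the chart. Because the $[L_t]$ have pairwise disjoint supports, $\nu$ is locally unique and the local measures glue into a globally defined $\nu=F_{*}(T|_{\Gamma})$ (up to a universal constant). Positivity of $T$ is equivalent to positivity of $\nu$ since each $[L_t]$ is positive and the decomposition is a direct integral.

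The \textbf{main obstacle} is the second step in the genuinely almost complex setting: the one-dimensional surviving subspace of $\Lambda^{1,1}$ varies with the point as $\partial_J y_2\wedge\bar\partial_J y_2$ rather than being uniformly a coordinate $(1,1)$-form, so the identification $T = T_{2\bar 2}\,dz_2\wedge d\bar z_2$ holds exactly only at $p$. The geometric leverage that overcomes this is that the Levi foliation of $\Gamma$ consists of $J$-complex curves; in the adapted frame this forces $\mu_{1k}|_\Gamma=0$, reducing the verification at each point of $\Gamma$ to essentially the integrable-$J$ calculation and allowing the constancy argument of the third step to go through as in Demailly's classical second support theorem.
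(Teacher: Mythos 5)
Your argument is correct and follows essentially the same route as the paper: use $\supp T\subset\Gamma$ together with $dT=0$ to get $\partial_J\rho\wedge T=\overline\partial_J\rho\wedge T=0$ for a defining function $\rho$ of $\Gamma$, conclude that $T$ is a measure times the surviving $(1,1)$-line, use closedness again to make that measure constant along the leaves, and glue via uniqueness. The only (cosmetic) difference is how the pointwise-varying $(1,1)$-line is handled: the paper simply observes that $\partial_J\rho$, $\overline\partial_J\rho$, $dt$ and $d\rho$ all annihilate the leaf tangent spaces, so $\partial_J\rho\wedge\overline\partial_J\rho$ is proportional to $dt\wedge d\rho$ and one can write $T=\lambda\, dt\wedge d\rho$ directly, without your adapted coordinates normalizing $\mu$ along $\Gamma$.
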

Formula (\ref{lam1}) means that the current $T$ act on every test (1,1) form $\phi$ by
\begin{eqnarray*}
\langle T, \phi \rangle = \int_{t \in \gamma} \left( \int_{L_t} \phi \right) d\nu(t)
\end{eqnarray*}
The currents of this type ("foliated cycles") play an important role in the theory of dynamical systems and foliations, see for instance \cite{GD}. We note that the notion of a positive current in ${\mathcal D}_{p,p}'(M)$ on an almost complex manifold $M$ is defined similarly to the case of complex manifolds.

\begin{proof} First we show the uniqueness of $\nu$. Fix a compact subset $K$ in $\gamma$ and a $C^1$ retraction $\sigma: V \to \Gamma$ from a neighborhood $V$ of $\Gamma$ on $\Gamma$. Using the partition of unity, one can easily  construct a test (1,1) form $\psi$ with a support in $V$ 
such that $\langle [L_t], \psi \rangle = 1$ for all $t \in K$. Then for every continuous function $f$ with  support in $K$  we have 
\begin{eqnarray*}
\int_\gamma f(t) d\nu(t) = \langle T, (f \circ F \circ \sigma) \psi \rangle
\end{eqnarray*}
This implies the uniqueness and the positivity of $\nu$.

We prove the existence. Fix a point $q \in \Gamma$ and an open neighborhood $U$ of $q$ such that an open piece $\gamma' = F(\Gamma \cap U) \subset \gamma$ is covered by single coordinate chart. By the rank theorem $\Gamma \cap U$ is foliated by a real 2-parametric family of curves such that every curve is transverse to the leaves $L_t$ and is diffeomorphic by $F$ to $\gamma'$. We denote again by $t$ a local coordinate on every curve of this family defined as the pull-back by $F$ of a local coordinate on $\gamma'$. Let $\Gamma \cap U = \rho^{-1}(0)$ where $\rho$ is a $C^1$ real function with non-vanishing gradient in $U$. Consider the (1,0) form 
$\beta = \partial_J\rho$. The same argument as in Proposition \ref{FirstStrTh} shows that $\beta \wedge T = \overline\beta \wedge T = 0$. Therefore
$$T = T_{1,1}\beta \wedge \overline\beta$$
on $U$. Note that the forms $\beta$, $dt$ and $d\rho$ vanish on tangent spaces of the leaves $L_t$. Therefore the forms $\beta \wedge \overline\beta$ and $dt \wedge d\rho$ are proportional and the exists a complex measure $\lambda$ with support in $\Gamma \cap U$ such that 
\begin{eqnarray*}
T = \lambda dt \wedge d\rho
\end{eqnarray*}
on $\Gamma \cap U$. Every leaf $L_t$ can be locally parametrized via a $J$-complex disc by a coordinate $z = x + i y \in \C$. 
The current $T$ is closed, therefore $\partial\lambda/\partial x = \partial\lambda/\partial y = 0$. This means that the measure $\lambda$ depends only on $t$. Since it is supported on $\rho = 0$, one can write in the form  that is $\lambda = d\nu_U(t) \otimes \delta_0(\rho)$ where $d\mu_U$ is a measure defined on $\gamma' = F(\Gamma \cap U)$ and $\delta_0$ is the Dirac mass at the origin. Hence 
\begin{eqnarray*}
\langle T, \phi \rangle = \int_{t \in \gamma} [L_t] d\nu_U(t)
\end{eqnarray*}
on $\Gamma \cap U$. The established above uniqueness shows that for two open neigborhoods $U_1$ and $U_2$ the measures $d\nu_{U_1}$ and $d\nu_{U_2}$ coincide on $F(\Gamma \cap U_1  \cap U_2$. Since the fibers $L_t$ are connected, the existence of of the unique measure $\nu$ on $\gamma$ follows.
\end{proof}

This argument can easily carried out to the general case. This leads to the following 
\begin{prop}
\label{SecSupTh2}
Let $(M,J)$ be an almost complex manifold and $N$ be a closed CR submanifold of class $C^1$ in $M$ of CR dimension $p$. Suppose that there exists a submersion $F:N \to \gamma$ of class $C^1$ onto a real manifold $\gamma$ such that every fiber $L_t = F^{-1}(t)$, $t \in \gamma$ is  a connected $J$-complex subbmanifold if $N$ of complex dimension $p$ (i.e. $N$ is Levi-flat). Let also $T \in {\mathcal D}_{p,p}'(M)$ be a closed current of order $0$ with $supp T \subset N$. 
Then there exists a unique complex measure $\nu$ on $\gamma$ such that $T$ is represented by (\ref{lam1}). The current $T$ is positive if and only if the measure $\nu$ is positive.
\end{prop}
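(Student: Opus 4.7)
The plan is to follow the proof of Proposition \ref{SecStrTh} line by line, replacing the codimension-one Levi-flat hypersurface in complex dimension two by the CR submanifold $N$ of CR dimension $p$ in an almost complex manifold $(M,J)$ of complex dimension $n$. For uniqueness I would argue exactly as there: fix a compact $K\subset\gamma$ and a $C^1$ retraction $\sigma:V\to N$ from a tubular neighborhood, and by a partition of unity construct a test $(p,p)$-form $\psi$ supported in $V$ with $\langle[L_t],\psi\rangle=1$ for every $t\in K$; this is possible because each $L_t$ is a closed $J$-complex submanifold of complex dimension $p$ depending continuously on $t\in\gamma$ through $F$. For any continuous $f$ on $\gamma$ with $\supp f\subset K$,
\begin{eqnarray*}
\int_\gamma f(t)\,d\nu(t)=\langle T,(f\circ F\circ\sigma)\psi\rangle,
\end{eqnarray*}
which determines $\nu$ uniquely and shows that positivity of $T$ forces positivity of $\nu$.

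For existence I would argue locally. Fix $q\in N$ and a chart $U$; up to enlarging $N$ to a generic submanifold of the same CR dimension, as in the proof of Proposition \ref{FirstStrTh}, I may assume $N\cap U=\{\rho_1=\cdots=\rho_{n-p}=0\}$ for real defining functions $\rho_j$. Let $\alpha_1,\ldots,\alpha_p$ be a basis of $(1,0)$-forms along the CR distribution of $N$ and set $\beta_j=\partial_J\rho_{j-p}$ for $j=p+1,\ldots,n$, so that $(\alpha_1,\ldots,\alpha_p,\beta_{p+1},\ldots,\beta_n)$ is a local basis of $(1,0)$-forms. The current $T$ is closed and of order zero, hence normal, so the same argument as in Proposition \ref{FirstStrTh} gives $\beta_j\wedge T=\bar\beta_j\wedge T=0$ for $j=p+1,\ldots,n$. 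Expanding $T=\sum T_{I,J}\,\beta_I\wedge\bar\beta_J$ over multi-indices of length $n-p$, these vanishings force $\{p+1,\ldots,n\}\subset I\cap J$, and therefore only the single term $I=J=\{p+1,\ldots,n\}$ survives:
\begin{eqnarray*}
T=T_0\,\beta_{p+1}\wedge\bar\beta_{p+1}\wedge\cdots\wedge\beta_n\wedge\bar\beta_n
\end{eqnarray*}
for a distribution $T_0$ of order zero, i.e.\ a complex Radon measure supported in $N\cap U$.

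To conclude I would pass to coordinates adapted to the Levi foliation: $J$-holomorphic leaf coordinates $w=(w_1,\ldots,w_p)$ along each leaf, parameters $t=(t_1,\ldots,t_{n-p})$ pulled back from $\gamma$ by $F$, and the defining functions $\rho=(\rho_1,\ldots,\rho_{n-p})$. After normalizing $J(0)=J_{st}$ at the base point, the combinations $\zeta_j=t_j+i\rho_j$ are local complex coordinates transverse to the leaves, and $\bigwedge_{j=1}^{n-p}\beta_{p+j}\wedge\bar\beta_{p+j}$ coincides with $\bigwedge_{j=1}^{n-p}dt_j\wedge d\rho_j$ up to a nowhere-vanishing smooth factor which I would absorb into $T_0$. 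Thus $T=\lambda\,dt_1\wedge d\rho_1\wedge\cdots\wedge dt_{n-p}\wedge d\rho_{n-p}$ for a complex measure $\lambda$ supported in $\{\rho=0\}$. The equation $dT=0$ then kills every partial derivative of $\lambda$ except those in the leaf directions $(w,\bar w)$, so $\lambda$ is $w$-independent, and combined with $\supp\lambda\subset\{\rho=0\}$ this yields $\lambda=d\nu_U(t)\otimes\delta_0(\rho)$. Fubini now gives
\begin{eqnarray*}
\langle T,\phi\rangle=\int_{F(N\cap U)}\left(\int_{L_t}\phi\right)d\nu_U(t)
\end{eqnarray*}
on $U$. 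The local measures $\nu_U$ agree on overlaps by the uniqueness of the first step, and since the leaves $L_t$ are connected they patch into the desired global complex measure $\nu$ on $\gamma$ satisfying (\ref{lam1}).

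The main obstacle I anticipate is the construction of the coordinate system $(w,t,\rho)$ simultaneously compatible with $N$, with the Levi foliation, and with the submersion $F:N\to\gamma$, together with the identification of $\bigwedge\beta\wedge\bar\beta$ with the real transverse volume form up to a nonzero smooth multiple; this is the rank theorem for $F$ combined with the Levi-flatness hypothesis. Everything else, in particular the first-structure-theorem reduction (which uses only that $T$ is normal and supported on a CR submanifold) and the closedness-plus-support argument that trivializes $\lambda$ transverse to the leaves, is a direct transcription of the two-dimensional proof of Proposition \ref{SecStrTh}.
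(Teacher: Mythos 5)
Your proposal is correct and follows exactly the route the paper takes: the paper gives no separate proof of Proposition \ref{SecSupTh2} beyond the remark that the argument of Proposition \ref{SecStrTh} ``can easily be carried out to the general case,'' and your write-up is a faithful execution of that generalization (uniqueness via a transverse test form, the first-structure-theorem reduction to the single surviving coefficient, and closedness plus the support condition forcing $\lambda=d\nu_U(t)\otimes\delta_0(\rho)$). The only cosmetic caveat is that the pairing $\zeta_j=t_j+i\rho_j$ presupposes $N$ generic (so that $\dim\gamma=\mathrm{codim}_{\mathbb R}N=n-p$); in the non-generic case the numbers of $t$'s and $\rho$'s differ, but your argument never actually uses the complex coordinates $\zeta_j$, only that $\bigwedge_{j}\beta_j\wedge\bar\beta_j$ is a nonvanishing volume form on the directions transverse to the leaves, which still holds.
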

Under the assumptions of Proposition \ref{SecSupTh2}, Corollary \ref{CompMan} can be used for represantation of currents $[L_t]$ of integration over the Levi leaves.

\section{Convergence of divisors}

 In this section we consider the problem of convergence of currents of integration over zero sets of almost holomorphic sections. 
  \subsection{Line bundles: Donaldson's convergence theorem}
 Let  $(M,\Omega,J)$ be a compact  manifold with a symplectic form $\Omega$ and a compatible almost complex structure $J$.  As we pointed out previously, $\Omega$ and $J$ define canonically a riemannian metric $g$. Let  $L\longrightarrow M$ be a complex line bundle over  $M$.  Consider  $L^{\otimes k}$ the k-th  tensor power of $L$. We suppose that $[\Omega/2\pi] \in H^2(M,\R)$ lies in the integral lattice $H^2(M,\Z)/torsion$. Then there exists a connection $D$ on $L$ with the curvature equal to   $-i\Omega$. Then  $L^{\otimes k}$ is naturally equipped with the product connection $D^k$ curvature  equal to  $-ik\Omega$. Together with $J$ this curvature form defines on  $M$ the dilated metric  $g_k=kg$. 
 In what follows $C$, $C_j$ denote positive contants independent of $k$; they can change from line to line.
 
 The following result is due to  Donaldson \cite{D}:
\begin{prop} 
\label{ASH} There exists a sequence $(s_k)$ of sections of  $L^{\otimes k}$ such that on every unit ball defined by the metric $g_k$ the following holds:
\begin{itemize}
\item[(i)] $  \vert s_k\vert \leq C_1$. 
\item[(ii)] $\vert D^ks_k\vert\leq C_1, \,\,\, \vert D^kD^ks_k\vert\leq C_1$.
\item[iii)] $\vert\bar\partial_L (s_k)(z)\vert\leq C_1k^{-{1\over 2}},\,\,\, \vert D^k\bar\partial s_k\vert\leq C_1k^{-{1\over 2}}$. 
\item[(iv)] $\vert\partial_L (s_k)(z)\vert\geq C_2\  \hbox{if}\  \vert s_k(z)\vert\leq C_2.$
\end{itemize}
Here the norms are taken with respect to $g_k$.
\end{prop}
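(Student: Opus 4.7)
The plan is to build $s_k$ from localized Gaussian ``peak'' sections modeled on the holomorphic section $e^{-|z|^2/4}$ of the trivial line bundle over $\C^n$ equipped with the standard connection form $\omega_0 = \frac{1}{4}\sum_j(z_j d\overline{z}_j - \overline{z}_j dz_j)$, whose curvature is $-i\Omega_0$. First, fix $x \in M$ and choose Darboux coordinates $z$ centered at $x$ with $J(x)=J_{st}$, together with a local unitary frame of $L$ in which the Hermitian connection form agrees with $\omega_0$ up to $O(|z|^2)$; such a frame exists because the curvature of $L$ equals $-i\Omega$. The induced connection on $L^{\otimes k}$ then has connection form $k\omega_0 + O(k|z|^2)$ and curvature $-ik\Omega$. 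Passing to the scaled coordinates $z' = k^{1/2}z$, in which the $g_k$-unit ball has radius one, the metric becomes Euclidean up to $O(k^{-1/2})$, the matrix $\mu'$ representing $J$ satisfies $|\nabla \mu'| = O(k^{-1/2})$ as already recorded in Section~2, and the connection form becomes $\omega_0 + O(k^{-1/2})$.

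In this trivialization I would define the local peak section by $s_{x,k}(z) = e^{-k|z|^2/4}$, which equals $e^{-|z'|^2/4}$ in scaled coordinates. Relative to the model connection $\omega_0$ this is the standard holomorphic Gaussian, annihilated exactly by $\overline{\partial} + \omega_0^{0,1}$. Multiplying by a smooth cutoff supported in the $g_k$-ball of radius $O(\sqrt{\log k})$ and extending by zero produces a smooth section $\tilde s_{x,k}$ on $M$ satisfying $|\tilde s_{x,k}| \le 1$ with Gaussian decay away from $x$. Properties (i) and (ii) then follow by direct differentiation, using the boundedness of $\omega_0$ and its derivatives on unit balls in $z'$. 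For (iii) I would expand $\overline{\partial}_L \tilde s_{x,k}$ using the explicit formulas of Section~2: the contribution from $J - J_{st}$ and from the connection correction $\omega - \omega_0$ each carry a factor $O(k^{-1/2})$, while the cutoff error is supported where the Gaussian is already $O(k^{-N})$ and is therefore negligible.

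To produce a single global section satisfying all four estimates simultaneously I would take a weighted sum $s_k = \sum_\alpha w_\alpha\, \tilde s_{x_\alpha,k}$ over a maximal $k^{-1/2}$-separated subset $\{x_\alpha\}\subset M$ with coefficients $w_\alpha\in\C$ bounded independently of $k$. The Gaussian decay of each peak makes the contributions from distant centers negligible, so properties (i)--(iii) are inherited by $s_k$ with the same exponents and only a loss in the constant $C_1$. The main obstacle is property (iv), the uniform $C_2$-transversality of $s_k$ to zero: this is the central content of Donaldson's construction and is established by perturbing the coefficients $w_\alpha$ by small (polylogarithmically bounded) amounts via a combinatorial scheme based on a quantitative Sard-type estimate, which delivers (iv) without spoiling (i)--(iii). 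For this last step I would refer directly to \cite{D}.
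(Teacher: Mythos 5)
The paper does not prove this proposition at all: it is stated as a quoted result of Donaldson, with \cite{D} as the only justification. Your write-up is therefore not in competition with an argument in the paper; it is an outline of Donaldson's own proof. As such it is faithful: the rescaled Darboux/unitary gauge in which the connection form is $\omega_0+O(k^{-1/2})$ and $|\nabla\mu'|=O(k^{-1/2})$, the Gaussian peak sections $e^{-k|z|^2/4}$, the cutoff at $g_k$-radius $O(\sqrt{\log k})$, and the summation over a net at $g_k$-scale $1$ are exactly the ingredients that give (i)--(iii), and these steps are routine. The caveat is that (i)--(iii) are not the content of the theorem --- a single peak section already satisfies them --- and the entire difficulty lives in (iv), the uniform transversality of one global section. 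Your final sentence delegates precisely this step (the quantitative Sard/Yomdin estimate for approximately holomorphic functions and the combinatorial colouring argument that perturbs the coefficients $w_\alpha$ colour by colour) back to \cite{D}. So, read as a self-contained proof, the proposal has a gap exactly where the theorem is hard; read as a citation with an expanded sketch of the construction, it is correct and in fact more informative than the paper, which offers no argument whatsoever.
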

  The conditions  $(iii)$ et $(iv)$ imply that the zero set $Z_k = (s^k)^{-1}(0)$ of every section  $s_k$ is a smooth real submanifold of  
  codimension 2.

Consider the connection $D = s_k^{-1}D^k s_k$ with the potential $\tau_k$ is defined by (\ref{gauge_pot}) using the section $s_k$ and the connection $D^k$. Now we can write the fundamental equation (\ref{LP1}) for every section $s_k$ of $L^{\otimes k}$:

 \begin{eqnarray}
 \label{DA0}
\frac{1}{2\pi i} d\tau_k = [Z_k] - (k/2\pi)\Omega
 \end{eqnarray}

As Donaldson proved,  $\frac{1}{k} d\tau_k \longrightarrow 0$ (in the sense of currents) as $k \longrightarrow \infty$ (we present his argument in a more general form below). Hence we obtain the following

 \begin{cor}
 \label{Conv1} ${1\over k}[Z_k]\longrightarrow {\Omega/2\pi}$ in the sense of currents.\end{cor}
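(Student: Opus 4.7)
My plan is to read off the conclusion directly from the fundamental equation (\ref{DA0}), using the Donaldson convergence $\frac{1}{k}d\tau_k \to 0$ recalled just above as input. Dividing (\ref{DA0}) by $k$ gives
$$\frac{1}{2\pi ik}d\tau_k = \frac{1}{k}[Z_k] - \frac{\Omega}{2\pi},$$
so passing to the weak limit on both sides immediately yields the corollary.

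For the Donaldson convergence itself I would apply Theorem \ref{Prop_LP5} to $s_k$, viewed as a section of $L^{\otimes k}$ (whose curvature is $-ik\Omega$), to obtain
$$\frac{1}{2\pi i}d\partial_J\log\langle s_k,s_k\rangle=[Z_k]+\frac{1}{2\pi i}\left(d\overline{\tau_k^{(0,1)}}-d\tau_k^{(0,1)}\right)-\frac{k}{2\pi}\Omega.$$
Comparing this with (\ref{DA0}) reduces the claim to two convergences in the weak sense of currents: $\frac{1}{k}d\partial_J\log|s_k|^2\to 0$ and $\frac{1}{k}d\tau_k^{(0,1)}\to 0$ (the conjugate term being handled identically).

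For the first I would show that $\frac{1}{k}\log|s_k|^2\to 0$ in $L^1_{loc}(M,g)$ and invoke continuity of $d\partial_J$. The upper bound $|s_k|\leq C_1$ from Proposition \ref{ASH}(i) gives $\frac{1}{k}\log|s_k|^2\leq O(1/k)$; the matching lower bound follows by integrating the transversality condition (iv), which yields $|s_k(z)|\gtrsim d_{g_k}(z,Z_k)$ on a $g_k$-unit neighborhood of $Z_k$. The dilation $w=\sqrt{k}z$ described in Section 2.4, combined with the change of volume $d\nu_g = k^{-n}d\nu_w$ and a covering of a fixed compact set by $O(k^n)$ unit $g_k$-balls, reduces matters to the standard integrability of $\log|w|$ on the unit ball and yields the uniform $L^1_{loc}(g)$ bound. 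For the second I would apply Proposition 3.3 with $\lambda_k = O(k^{-1/2})$ (coming from comparing parts (iii) and (iv) of Proposition \ref{ASH}), which decomposes $d\tau_k^{(0,1)} = T_{1,k}^{\varepsilon}+T_{2,k}^{\varepsilon}$ with actions controlled by integrals $I_1,I_2,I_3$. Using that the $(2n-2)$-mass of $Z_k$ is $O(k)$ (since $[Z_k]$ is cohomologous to $kc_1(L)_\R$), one gets bounds of the form $I_1\lesssim k\varepsilon$ and analogous estimates for $I_2,I_3$, and choosing $\varepsilon_k\sim k^{-1/2+\delta}$ makes $\frac{1}{k}|\langle T_{j,k}^{\varepsilon_k},\phi\rangle|\to 0$ for $j=1,2$.

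The main obstacle is precisely this last balancing: shrinking $\varepsilon_k$ controls $I_1$ but worsens the $1/\varepsilon$ factor appearing in $I_3$ (and pushes the region of $I_2$ closer to $Z_k$, where $|s_k|$ degenerates). The Donaldson rescaling $w=\sqrt{k}z$ is the device that makes these competing requirements simultaneously satisfiable, since it converts all the $k$-dependent estimates of Proposition \ref{ASH} into uniform estimates on unit balls.
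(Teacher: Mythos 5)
Your opening step coincides with the paper's proof of the corollary: divide the fundamental equation (\ref{DA0}) by $k$ and invoke $\frac{1}{k}d\tau_k\to 0$. The gap is in how you propose to establish that convergence. Your detour through Theorem \ref{Prop_LP5} and Proposition 3.3 forces you to control $\frac{1}{k}T_{2}^{\varepsilon_k}$, whose bound involves $I_2=\int_{Z_{\varepsilon/2}}\left\vert d\left(\bar\partial_L s_k/s_k\right)\right\vert d\nu$. This term carries no factor of $\lambda$ at all (the paper explicitly remarks that $T_2^{\varepsilon}$ is \emph{not} small when $\lambda$ is small), and it contains the piece $\vert\bar\partial_L s_k\wedge ds_k\vert/\vert s_k\vert^2$, which near the codimension-two zero set behaves like $\mathrm{dist}(\cdot,Z_k)^{-2}$ --- borderline non-integrable, producing a $\log(1/\varepsilon_k)$ over the tube, multiplied by the mass of $Z_k$, which grows like $k$. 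Whether $\varepsilon_k\sim k^{-1/2+\delta}$ really makes $\frac{1}{k}(I_2+I_3)\to 0$ is precisely the computation you defer ("the main obstacle is precisely this last balancing") and never carry out, so the argument is incomplete at its critical point. The first half of your reduction (showing $\frac{1}{k}\log\vert s_k\vert^2\to 0$ in $L^1_{loc}$ via the lower bound $\vert s_k\vert\geq c\min(d_{g_k}(\cdot,Z_k),c')$ obtained from $\eta$-transversality, plus rescaling) is sound, but it only disposes of one of your two pieces.

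The paper's route avoids all of this. Since $\langle d\tau_k,\phi\rangle=\pm\langle\tau_k,d\phi\rangle$, it suffices to have the pointwise bound $\vert\tau_k\vert\leq Ck^{1/2}/\vert s_k\vert$ (Lemma \ref{DA1} with $n=1$, using $\vert D^k s_k\vert_{g_k}\leq C_1$) together with the uniform bound $\int_M\vert s_k\vert^{-1}d\lambda\leq C$, which is proved by covering $M$ with unit $g_k$-balls, rescaling by $\sqrt{k}$, and applying the co-area formula with the $\eta$-transversality (Proposition \ref{DA6} and the lemma in its proof). This yields $\frac{1}{k}\vert\langle d\tau_k,\phi\rangle\vert\leq Ck^{-1/2}\Vert d\phi\Vert_\infty$ in one stroke. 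Note that the same single integration by parts applied directly to $\tau_k^{(0,1)}$ would also repair your part (b): property (iii) gives $\vert\tau_k^{(0,1)}\vert_g\leq C/\vert s_k\vert$, hence $\frac{1}{k}\vert\langle d\tau_k^{(0,1)},\phi\rangle\vert=O(k^{-1})$ using the same integral bound. The $\varepsilon$-splitting of Proposition 3.3 is thus both the source of your difficulty and unnecessary here.
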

If the almost complex structure $J$ is integrable, Donaldson's constructure allows to construct holomorphic sections. In this case similar results on approximation of currents by holomorphic divisors were obtained by Duval-Sibony and Guedj \cite{DS,G}.

\subsection{General case}

In this section we consider asymptotically holomorphic  sections $s_k$ constructed by Auroux (this generalizes Donaldson's construction for line bundles to the case of vector bundles of arbitrary rang). We  prove that their potentials  $\sigma_k$  in the sense of   Harvey-Lawson  satisfy the following property : ${1\over k^n}d\sigma_k \longrightarrow 0$ when $k \longrightarrow \infty$.

We recall the generalization of Donaldson's construction due to  Auroux (\cite{AU}). We suppose that  $(M,\Omega,J)$ is a compact symplectic manifold of real dimension $2m$  and  satisfying the assumptions of Donaldson's theorem. Let   $E \longrightarrow M$ be a  hermitian vector bundle  of rang $n$, $n \le m$. Consider a complex line bundle $L \longrightarrow M$ with a hermitian connection $D$ such that its curvature is equal to $-i\Omega$. We again consider the k-th tensor power   $L^{\otimes k}$ with the connection $D^k$ and the curvature $-ik\Omega$. Note that the choice of connection on $E$ is not essential and only changes the constants in the estimates below. These hermitian connections define on the product  $E\otimes L^k$ a hermitian connection  which (with a slight abuse of notations) we again denote by $D^k$. 
  
  A sequence of sections  $(s_k)_k$ of $E\otimes L^k$ is called {\it asymptotically holomorphic} if it satisfies the following properties (i) - (iii) of Proposition  \ref{ASH}; the property (iv) needs to be adapted to the case of vector functions:
\begin{itemize} 
\item[(iv)] there exists  $\eta > 0$ (independent of $k$) such that  the covariant derivative $D^ks_k(x): T_xM \longrightarrow(E\otimes L^k)_x$ is surjective and admits a right inverse whose norm  is smaller than $\eta^{-1}$, for all $x$ satisfying $\vert s_k(x)\vert\leq \eta$ and all $k$ large enough.
\end{itemize}
The property (iv) is called the $\eta$-transversality. Note that the norms are considered with respect to the fixed metric $g_k$.

The main result of this section is

\begin{theorem} 
\label{MainConvTheo}Let  $(s_k)_k$ be a sequence of asymptotically holomorphic sections satisfying (i)-(iv). Then $(1/ k^n)Div(s_k) \longrightarrow \Omega^n /(2\pi)^n$ in the sense of currents when $k \longrightarrow \infty$.
\end{theorem}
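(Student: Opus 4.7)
The strategy is to apply the Harvey--Lawson fundamental equation (\ref{BM5}) to the sequence $(s_k)$ viewed as sections of $E \otimes L^k$ with its induced hermitian connection $D^k$, producing a global potential $\sigma_k$ satisfying
\begin{equation*}
d\sigma_k = c_n(D^k) - Div(s_k).
\end{equation*}
Dividing by $k^n$ yields
\begin{equation*}
\frac{1}{k^n}\,Div(s_k) = \frac{1}{k^n}\,c_n(D^k) - \frac{1}{k^n}\,d\sigma_k,
\end{equation*}
so the conclusion will follow from two facts: $(1/k^n)\,c_n(D^k) \to \Omega^n/(2\pi)^n$ smoothly, and $(1/k^n)\,d\sigma_k \to 0$ weakly as currents.

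For the Chern form, the curvature of $D^k$ on $E \otimes L^k$ is $\Omega_E\otimes Id + Id\otimes(-ik\Omega)$, so by the splitting principle applied termwise to $c(E\otimes L^k)=\prod_j(1+a_j+kh)$ with $h=c_1(L)=\Omega/2\pi$, the top Chern form admits the pointwise expansion $c_n(D^k)=k^n(\Omega/2\pi)^n+O(k^{n-1})$, uniformly in any $C^r$-norm on $M$. Hence $(1/k^n)\,c_n(D^k)$ converges in $C^\infty$ to $\Omega^n/(2\pi)^n$, and a fortiori as currents.

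The heart of the proof is the $L^1$ estimate
\begin{equation*}
\int_M |\sigma_k|_g\, dv_g = O(k^{n-1/2}),
\end{equation*}
which paired with any test form $\phi$ of complementary degree gives
\begin{equation*}
\left|\left\langle \frac{1}{k^n}\,d\sigma_k,\phi\right\rangle\right| = \frac{1}{k^n}\,|\langle\sigma_k,d\phi\rangle| \leq \frac{\|d\phi\|_\infty}{k^n}\int_{\supp\phi}|\sigma_k|_g\,dv_g = O(k^{-1/2})\longrightarrow 0.
\end{equation*}
To prove this estimate I would use Donaldson's rescaling in the spirit of Auroux. Cover $M$ by $O(k^m)$ coordinate balls of $g$-radius $O(k^{-1/2})$, which are unit balls in the rescaled metric $g_k=kg$. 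In each such ball, perform the dilation $w=k^{1/2}z$; by the observation at the end of the discussion of (\ref{ComplexMatrix}) the almost complex structure $J$ converges in $C^\infty$ to $J_{st}$, the connection matrix of $L^k$ (whose curvature is $-ik\Omega$) rescales to a bounded hermitian connection with curvature close to $-i\Omega_0$, and properties (i)--(iv) of the asymptotically holomorphic sections translate to uniform $C^2$ bounds on $s_k$, uniform $\eta$-transversality to zero, and $|\bar\partial s_k|=O(k^{-1/2})$. Consequently, in rescaled coordinates the local expression (\ref{BM4}) for $\sigma_k$ is uniformly close to the standard Bochner--Martinelli potential of a transversal holomorphic section of a model hermitian bundle over $\B\subset\C^m$, whose coefficients are classically $L^1_{loc}$. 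This yields a uniform $L^1$ bound per unit $g_k$-ball; summing over the $O(k^m)$ balls gives $\int_M|\sigma_k|_{g_k}\,dv_{g_k}=O(k^m)$. Since $|\alpha|_g=k^{(2n-1)/2}|\alpha|_{g_k}$ for a $(2n-1)$-form $\alpha$ and $dv_g=k^{-m}dv_{g_k}$, converting to the original metric yields the claimed $O(k^{n-1/2})$.

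The principal obstacle is the uniform local $L^1$ bound for $\sigma_k$ in the rescaled coordinates: the expression (\ref{BM4}) contains the singular terms $u^{*}Du/|u|^2$ and $Du^{*}\wedge Du/|u|^2$ whose behavior along $Z_k$ must be compared carefully with the standard model. The combination of asymptotic holomorphicity with the $\eta$-transversality is precisely what is needed to guarantee that, in each unit $g_k$-ball, $s_k$ is uniformly $C^1$-comparable to a classical transverse holomorphic section of the model bundle, for which these Bochner--Martinelli integrands are locally integrable with universal bounds. The remaining terms in the expansion of $\sigma_k$, which carry higher powers of the curvature of $E\otimes L^k$, are less singular along $Z_k$ and are absorbed into the same uniform estimate.
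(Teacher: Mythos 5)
Your overall architecture coincides with the paper's: write $\frac{1}{k^n}Div(s_k)=\frac{1}{k^n}c_n(D^k)-\frac{1}{k^n}d\sigma_k$ via the Harvey--Lawson fundamental equation, get the Chern form asymptotics $c_n(D^k)=k^n(\Omega/2\pi)^n+O(k^{n-1})$, and kill the potential term by an $L^1$ estimate on $\sigma_k$ obtained from the pointwise bound of Lemma \ref{DA1} together with Donaldson-type rescaling to unit $g_k$-balls. Your power counting ($O(k^{m})$ balls, the conversion $|\alpha|_g=k^{(2n-1)/2}|\alpha|_{g_k}$, the final $O(k^{-1/2})$) is consistent with the paper's Proposition \ref{DA6}.

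The gap is exactly at the point you yourself flag as ``the principal obstacle'': the uniform local $L^1$ bound, i.e.\ in the paper's notation the estimate $\int_{B}|u^{k}|^{-(2n-1)}\,d\lambda\le C$ uniformly in $k$ on each unit $g_k$-ball. You dispose of it by asserting that $s_k$ is ``uniformly $C^1$-comparable to a classical transverse holomorphic section of the model bundle, for which these Bochner--Martinelli integrands are locally integrable with universal bounds.'' This is not an argument. First, $C^1$-closeness to a holomorphic model away from the zero set does not transfer $L^1$ bounds across the singular locus: the integrand blows up like $\mathrm{dist}(\cdot,Z_k)^{-(2n-1)}$ near a set $Z_k$ that moves with $k$, and a small $C^1$ perturbation can in principle change the geometry of the level sets (hence the mass of the singular part) unless one uses the quantitative transversality. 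Second, even for the holomorphic model, ``universal bounds'' for $\int_B|u|^{-(2n-1)}$ over a family is precisely the statement that needs proof; classical local integrability of the Bochner--Martinelli kernel is a qualitative fact about a single section and gives no uniformity. The paper closes this gap concretely: it splits the integral at $\{|u^k|=\eta\}$, applies the co-area formula on $\{|u^k|\le\eta\}$ using the Jacobian lower bound $(Ju^k)^{-1}\le\eta^{-1}$ supplied by $\eta$-transversality, reduces to $\int_{|\sigma|\le\eta}\mathrm{Vol}(B\cap\{u^k=\sigma\})\,|\sigma|^{1-2n}\,d\sigma$, and then bounds the level-set volumes uniformly in $k$ and $\sigma$ by a contradiction/Arzela--Ascoli compactness argument based on the uniform $C^2$ bounds from (i')--(iii'). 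You have all the hypotheses needed for this step in hand ($\eta$-transversality and the rescaled $C^2$ bounds), but without the co-area computation and the uniform level-set volume bound (or an equivalent), the estimate $\int_M|\sigma_k|_g\,dv_g=O(k^{n-1/2})$, and hence the theorem, is not established.
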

This is shown in \cite{AU} using topological arguments that the zero sets of sections $s_k$ have some density propeties when $k$ goes to $\infty$; to our best knowledge the approximation Theorem \ref{MainConvTheo} did not appear in literature previously. 

First we establish a general estimate for the Bochner-Martinelli potential.

 Let $E \longrightarrow M$ be a vector bundle of  rang $n$ over a smooth manifold $M$. We suppose that $E$ is equipped with  a hermitian connection   $D$ with the curvature  $\Omega$. Let $s$ be a smooth (global) section of $E$. We have the following:

\begin{lemma}
\label{DA1} The potential   $\sigma$ of $s$ defined by  (\ref{BM4}) satisfy the following estimate:
\begin{eqnarray}
\label{DA2}
\vert \sigma\vert\leq C \left({\vert Ds\vert \over \vert s\vert}\sum_{j=0}^{n-1}\left(\vert \Omega\vert+ {\vert Ds\vert^2\over \vert s\vert^2}\right)^{j}\vert \Omega\vert^{n-1-j}\right).
\end{eqnarray}
\end{lemma}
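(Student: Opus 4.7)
The proof proposal is a direct estimation of the expression (\ref{BM4}) term by term, exploiting the fact that one can freely pass to an orthonormal trivializing frame since both sides of (\ref{DA2}) are intrinsic (the left-hand side is, by the globality statement following (\ref{BM4}), and the right-hand side is expressed in terms of norms of globally defined objects).

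First I would fix a point $x \in M \setminus Z_s$ and choose a local orthonormal frame $(f)$ of $E$ near $x$. In this frame, writing $s = u f$ with $u = (u_1,\dots,u_n)$ and $h = \mathrm{Id}$, one has $u^\ast = \bar u^{\,t}$, $|u|^2 = u u^\ast = |s|^2$, $|Du| = |Ds|$, and $|Du^\ast| = |Ds|$. The volume element $\lambda(f) = f_1^\ast \wedge f_1 \wedge \cdots \wedge f_n^\ast \wedge f_n$ is of unit comass, so the scalar coefficient $\sigma$ is controlled by the pointwise norm of the form it multiplies.

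Next I would estimate the three building blocks appearing in (\ref{BM4}):
\begin{itemize}
\item The prefactor: $\displaystyle \left|\frac{u^\ast Du}{|u|^2}\right| \le \frac{|u^\ast|\,|Du|}{|u|^2} = \frac{|Ds|}{|s|}.$
\item Each factor in the first wedge product: $\displaystyle \left| \Omega - \frac{Du^\ast \wedge Du}{|u|^2}\right| \le |\Omega| + \frac{|Du^\ast|\,|Du|}{|u|^2} = |\Omega| + \frac{|Ds|^2}{|s|^2}.$
\item Each factor in the second wedge product: $|\Omega| \le |\Omega|$.
\end{itemize}
Raising the first to the power $k$, the second to the power $n-1-k$, combining with the prefactor, and summing over $k=0,\dots,n-1$ (absorbing the constants $(1/n!)(1/2\pi)^n$ and a dimensional comass-versus-norm constant into the overall $C$), one obtains exactly
\[
|\sigma| \le C\,\frac{|Ds|}{|s|} \sum_{k=0}^{n-1}\left(|\Omega| + \frac{|Ds|^2}{|s|^2}\right)^{k} |\Omega|^{n-1-k}.
\]

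The main (minor) obstacle is bookkeeping: verifying that replacing the wedge-product norms of matrix-valued forms by the products of the corresponding operator norms costs only a dimensional constant, and checking that the result, which is expressed through the intrinsic quantities $|Ds|/|s|$ and $|\Omega|$, is indeed independent of the auxiliary orthonormal frame chosen; this is immediate since all norms are hermitian-invariant. There is no analytic difficulty, no issue near $Z_s$ (the estimate is pointwise on $M \setminus Z_s$, which is where $\sigma$ is defined), and the Leibniz-type inequalities used are the standard ones for exterior products of matrix-valued forms.
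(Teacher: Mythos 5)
Your proposal is correct and follows essentially the same route as the paper: pass to an orthonormal frame so that $h=\mathrm{Id}$, bound the prefactor by $\vert Ds\vert/\vert s\vert$ via $\vert f^*u^*Duf\vert\le\vert Ds\vert\,\vert s\vert$, bound each curvature-minus-$Du^*\wedge Du$ factor by $\vert\Omega\vert+\vert Ds\vert^2/\vert s\vert^2$ via $\vert f^*Du^*Duf\vert\le\vert Ds\vert^2$, and sum. The only point worth making explicit (as the paper does) is that $\vert Du^*\vert=\vert Ds\vert$ because the connection form is anti-hermitian in an orthonormal frame of a hermitian connection; otherwise your bookkeeping matches the paper's argument.
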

Note that all norms here are defined by the hermitian structure of the bundle $E \to M$

\begin{proof} Since the potential $\sigma$ is independent of a choice of a  base, we choose an orthonormal frame  $f$ in a local trivialization. Then $h=Id$, $u^*=\bar u^t$, $Du=du+u\omega$ et $Du^{*}=du^{*}-\omega u^{*}$. Therefore in order to show that   $\sigma$ satisfies (\ref{DA2})  it suffices to establish the estimates:
\begin{eqnarray}
\label{DA3} 
 \vert f^*u^*Duf\vert\leq \vert Ds\vert\vert s\vert
 \end{eqnarray}
 and
 \begin{eqnarray}
 \label{DA4}
 \vert f^*Du^*Duf\vert\leq \vert Ds\vert^2.
 \end{eqnarray}
 The equalities  $\vert f^*u^*\vert =\vert \sum f^*\bar u^t\vert=\vert s\vert$ and  $\vert Duf\vert=\vert Ds\vert$ imply (\ref{DA3}). Since the connection $D$ is hermitian and the frame $f$ is orthonormal, the connection form $\omega$ is anti-hermitian. Hence we have  $\vert f^*Du^*\vert=\vert f^*(du^{*}-Au^{*})\vert=\vert \overline{(Ds)^t}\vert$ and (\ref{DA4}) follows. This proves lemma.\end{proof}
\bigskip

It follows by Lemma \ref{DA1} that  the potential $\sigma_k$ of $s_k$ satisfies the uniform (in $k$) estimate:
\begin{eqnarray}
\label{DA5}
\vert\sigma_k\vert\leq C {k^{n-{1\over 2}}\over \vert s_k\vert ^{2n-1}}.
\end{eqnarray} This estimate will be used in the proof of the main technical result of this section:

\begin{prop}
\label{DA6} For every test form  $\phi$ on $M$, the potential  $\sigma_k$ satisfies the estimate
\begin{eqnarray}
\label{DA7}
{1\over k^n}\vert \langle d\sigma_k,\phi\rangle  \vert\leq C k^{-{1\over 2}}\vert\vert d\phi\vert\vert_{\infty}
\end{eqnarray}
uniformly in $k$.
\end{prop}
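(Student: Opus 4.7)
The plan is to reduce (\ref{DA7}) to a uniform $L^1$-bound on $|\sigma_k|$ in the fixed metric $g$. By duality $\langle d\sigma_k,\phi\rangle=\langle\sigma_k,d\phi\rangle$ (up to sign), and inserting the pointwise estimate (\ref{DA5}) gives
$$\frac{1}{k^n}|\langle d\sigma_k,\phi\rangle|\le C\,k^{-1/2}\|d\phi\|_\infty\int_M\frac{dV_g}{|s_k|^{2n-1}}.$$
Everything therefore comes down to showing that $I_k:=\int_M |s_k|^{-(2n-1)}\,dV_g$ stays bounded as $k\to\infty$.

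On $\{|s_k|\ge\eta\}$ the integrand is trivially bounded, contributing $O(1)$. On the remaining set, property (iv) asserts that the smallest singular value of $Ds_k(x)$ in $g_k$ is at least $\eta$; combined with the $C^2$-bound (ii), an integration along $g_k$-normal geodesics to $Z_k$ produces, on a $g_k$-tube $U_k$ of uniform radius $\rho$, the two-sided comparison
$$c_1\,d_{g_k}(x,Z_k)\le |s_k(x)|\le C_1\,d_{g_k}(x,Z_k),$$
with $c_1,C_1,\rho$ independent of $k$. Since $d_{g_k}=\sqrt{k}\,d_g$, the set $\{|s_k|<\eta\}$ lies in $\{d_g(\cdot,Z_k)<\eta/(c_1\sqrt{k})\}$; a radial integration in tubular coordinates in $g$ then gives, per unit $(2m-2n)$-area on $Z_k$,
$$\int_0^{\eta/(c_1\sqrt{k})}(c_1\sqrt{k}\,r)^{-(2n-1)}r^{2n-1}\,dr=O(k^{-n}),$$
so that $I_k\le C+C\,k^{-n}\,\mathcal{H}^{2m-2n}_g(Z_k)$.

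The final and most delicate step is the area estimate $\mathcal{H}^{2m-2n}_g(Z_k)=O(k^n)$, which must be extracted \emph{a priori} from (i)--(iv) alone, without invoking the convergence we are trying to prove. The upper half $|s_k|\le C_1 d_{g_k}$ of the comparison above forces the uniform $g_k$-tube $\{d_{g_k}(\cdot,Z_k)<\eta/(2C_1)\}$ into $M$; because $\rho$ is $k$-independent and the $C^2$-control bounds the second fundamental form of $Z_k$ in $g_k$ uniformly, the Weyl tube formula makes the $g_k$-volume of this tube comparable to $\mathcal{H}^{2m-2n}_{g_k}(Z_k)$. Since $\mathrm{Vol}_{g_k}(M)=k^m\,\mathrm{Vol}_g(M)$, this yields $\mathcal{H}^{2m-2n}_{g_k}(Z_k)=O(k^m)$ and hence $\mathcal{H}^{2m-2n}_g(Z_k)=O(k^n)$. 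Plugging back gives $I_k=O(1)$, and the factors $k^{n-1/2}$ from (\ref{DA5}), $k^{-n}$ from the transverse integration and $k^n$ from the area of $Z_k$ combine to the rate $k^{-1/2}$ claimed. The main obstacle is precisely this last step: one has to use the \emph{upper} bound on $Ds_k$ to trade the transverse thickness of the tube for a uniform area bound on $Z_k$, a step which requires care with the scaling of $g$ versus $g_k$ and with the uniformity in $k$ of the tubular radius $\rho$.
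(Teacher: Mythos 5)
Your opening reduction coincides with the paper's: integrate by parts, insert the pointwise bound (\ref{DA5}), and reduce (\ref{DA7}) to the uniform boundedness of $\int_M |s_k|^{-(2n-1)}\,d\lambda$. Where you genuinely diverge is in how that integral is controlled. The paper rescales to unit $g_k$-balls (of which $O(k^m)$ cover $M$), writes $s_k$ there as a vector function $u^k$ satisfying (i')--(iv'), and applies the \emph{co-area formula}: the integral over $\{|u^k|\le\eta\}$ becomes $\int_{|\sigma|\le\eta}|\sigma|^{1-2n}\,\mathrm{Vol}\bigl(B\cap\{u^k=\sigma\}\bigr)(Ju^k)^{-1}\,d\sigma$, which is integrable because $\sigma$ ranges over a $2n$-real-dimensional ball, and the volumes of \emph{all} level sets are then bounded uniformly by an Arzel\`a--Ascoli compactness/contradiction argument. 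You instead prove the two-sided comparison $|s_k|\sim d_{g_k}(\cdot,Z_k)$, integrate radially in a tube around $Z_k$, and trade the transverse factor $k^{-n}$ against an a priori area bound $\mathcal{H}^{2m-2n}_g(Z_k)=O(k^n)$. Your route is more quantitative (effective constants, no compactness), at the price of needing more a priori geometry of $Z_k$; the scaling bookkeeping $k^{n-1/2}\cdot k^{-n}\cdot k^{n}\cdot k^{-n}=k^{-1/2}$ is correct.

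Two steps need tightening. First, the tube-volume lower bound $\mathrm{Vol}_{g_k}(\text{tube})\ge c\,\rho^{2n}\,\mathcal{H}^{2m-2n}_{g_k}(Z_k)$ does \emph{not} follow from a bound on the second fundamental form alone: many flat parallel sheets of $Z_k$ packed at mutual $g_k$-distance $\ll\rho$ have bounded curvature but a tube whose volume is far smaller than (area)$\,\times\,\rho^{2n}$. What you need is a uniform lower bound on the normal injectivity radius (reach) of $Z_k$ in $g_k$, i.e., disjointness of the normal $\rho$-disks. This is true, but it comes from the $\eta$-transversality (iv) combined with $|D^kD^ks_k|\le C_1$ — these force $Z_k\cap B_{g_k}(x,c\eta/C_1)$ to be a single graph over $T_xZ_k$ for each $x\in Z_k$, excluding nearby second sheets — and that argument should be made explicit. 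Second, the lower comparison $|s_k|\ge c_1\,d_{g_k}(\cdot,Z_k)$ must be proved on all of $\{|s_k|\le\eta/2\}$ (e.g., by flowing along $-(D^ks_k)^{+}s_k$, which reaches a zero within $g_k$-distance $\eta^{-1}|s_k(x)|$), not merely on a tube of preassigned radius; as written, the inclusion $\{|s_k|<\eta\}\subset\{d_{g_k}(\cdot,Z_k)<\eta/c_1\}$ presupposes what it is used to prove.
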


\begin{proof} We closely follow Donaldson's argument for the case of line bundles. From (\ref{DA5}) we obtain 
\begin{eqnarray}
\label{DA8}
{1\over k^n}\vert \langle d\sigma_k,\phi \rangle \vert\leq C k^{-{1\over 2}}\vert\vert d\phi\vert\vert_{\infty}\int_{M}{1\over \vert s_k\vert^{2n-1}}d\lambda
\end{eqnarray}
where $\lambda$ is the  volume form on $M$. 

It suffices to show that the integral in the right hand is  upper bounded uniformly in $k$.  Note that the norm of the section $s_k$ in (\ref{DA5}) and in the integral in (\ref{DA8}) is defined by the metric $g_k = kf$.

We cover $M$ by the unit balls $B_k$ (we do not specify their centers in these notations) with respect to the dilated metric $g_k$ on $M$; such a covering contains $O(k^n)$ balls. Denote by  $d\lambda_k = k^n d\lambda$ 
the volume form of $g_k$.   It suffices to show that the following upper bound uniform in $k$:
\begin{eqnarray}
\label{DA9}
I = \int_{B_k}{1\over \vert s_k\vert^{2n-1}}d\lambda_k \le C
\end{eqnarray}

For $k$ large enough we can choose on the ball $B_k$ a trivialization of the bundle  $E\otimes L^k$ such that the connection matrix of  $L^k$ is given by  $\omega ={1\over 4}(\sum z_id\bar z_i-\bar z_idz_i)$ (in fact we first chosse a trivialization with the connection form  $k\omega$ and perform a dilation of coordinates with the coefficient $k^{-{1\over 2}})$. In such a trivialization a connection fixed on $E$ is irrelevant since it only changes constants in estimates (recall that  $M$ is compact) and we can identify $B_k$ with the Euclidean unit ball $B$. Then every section $s_k$ can be viewed as a vector function  $\sum_{i=1}^{n}u^{k}_ie_i$ where $e_i$ is a usual base of $\C^n$ and   $u^{k}_i: B \longrightarrow \C$ are almost holomorphic functions. Now the desired estimate follows by Donaldson's estimate of the integral (\ref{DA9}) (obtained for $n=1$ and scalar almost holomorphic functions satisfying the $\eta$-transversality) applied to every component $u^k_i$. The vector functions $u^k$ satisfy the following properties:

\begin{itemize}
\item[(i')] $\vert u^k\vert=O(1),\ \vert \bar\partial u^{k}_i\vert =O(k^{-{1\over 2}})$
\item[(ii')] $\vert d u^{k}_i \vert=O(1)$
\item[(iii')] $\vert d\bar\partial u^{k}_i \vert=O(1)$
\end{itemize}
which follow respectively  from the properties (i), (ii), (iii) of asymptotically holomorphic functions. By the Jacobian determinant $J u$ of a vector function $u$ we mean 
$$J u:=  (\det (\partial u)(\overline{\partial u})^t)^{1/2}$$
where $(\partial u) = (\partial u_j/ \partial z_s)$ is the (complex) Jacobian matrix of $u$ and $(\bullet)^t$ denotes the transposition. 

The $\eta$-transversality (iv) implies that 
\begin{itemize}
\item[(iv')] $(Ju^{k})^{-1}\leq \eta^{-1}$ if  $\vert u^{k}\vert\leq  \eta$ 
\end{itemize}

Denote by  $m$ the dimension of $M$. 

\begin{lemma}
Suppose that a vector function $u^{k}=(u^{k}_1,\cdots ,u^{k}_n)$ smooth in a neighborhood of the closed unit ball $B$ satisfies (i')-(iv'). Then the integral  
$$I = \int_{B}{1\over \vert u^{k}\vert^{2n-1}}d\lambda \le C$$
is bounded uniformly in $k$.
\end{lemma}

\begin{proof} Fix  $\eta > 0$ provided by (iv'). We have $I = I_1 + I_2$ with 
$$I_1 =  \int_{B\cap \{\vert u^{k}\vert\geq \eta\}}{1\over \vert u^{k}\vert^{2n-1}}d\lambda\le  \eta^{1-2n}Vol(B)$$ and 
 $$I_2 = \int_{B\cap \{\vert u^{k}\vert\leq \eta\}}{1\over \vert u^{k}\vert^{2n-1}}d\lambda.$$ 
  In order to find an upper bound for the second integral, we follow Donaldsson's argument and apply the co-area formula 
  $$I_2 = I_3$$ with 
  \begin{eqnarray*}
I_3= \int_{\vert \sigma\vert \leq \eta}\bigg(\int_{B\cap \{u^{k}=\sigma\}}{1\over \vert u^k\vert^{2n-1}}.(Ju^{k})^{-1}d\lambda_{\sigma}\bigg)d\sigma 
\end{eqnarray*}
Here $\sigma=(\sigma_1,\cdots,\sigma_n)$ and $d\lambda_\sigma$ are volume measures for the level set $\{u^{k}=\sigma\}$; note that by (iv') this level set is a smooth variety of codimension 2n  when $\vert\sigma\vert\leq \eta$.  
In view of the bound (iv') we have 
\begin{eqnarray*}
I_3 = \int_{\vert \sigma\vert \leq \eta}\bigg(\int_{B\cap \{u^{k}=\sigma\}}{1\over \vert \sigma\vert^{2n-1}}.(Ju^{k})^{-1}d\lambda_{\sigma}\bigg)d\sigma
 \le C\int_{\vert \sigma\vert \leq \eta}{Vol(B\cap \{ u^k=\sigma\})\over \vert \sigma\vert ^{2n-1}}.
 \end{eqnarray*} 
 
 In order to conclude the proof, it suffices to establish an upper bound on the  volumes of manifolds $B\cap \{ u^k=\sigma\}$ 
 uniform  $k$ and $\vert \sigma\vert \leq \eta$. Arguing by contradiction, assume that (passing to a subsequence) that $(u^k)$ satisfies (i')-(iv') and 
 a sequence   $(\sigma^k)$ with $\vert \sigma_k\vert\leq \eta$ are such that  $B\cap \{ u^k=\sigma_k\}\rightarrow\infty$ when $k \to \infty$. 
 Using the standard elliptic estimates for the $\overline\partial$-operator (for example, the  Bochner-Martinelli integral formula for every component of $u^k$ in $B$) $u^{k}_i$, we obtain from the properties (i') and (iii') that the sequence $(u^{k})$ is uniformly bounded in the  $C^2$ norm on  $B$. By Arzela-Ascoli, the (sub) sequence $(u^{k})_k$ converges to $u$ in the  $C^1$-norm on  $B$; one can assume also that $\sigma^k$ converges to $\sigma$ with $\vert \sigma\vert\leq \eta$. Since  $u$ satisfies (ii'), (iv'), the volume  $Vol(B\cap u=\sigma)$ is finite. But $B\cap \{ u^k=\sigma_k\}\rightarrow Vol(B\cap u=\sigma)$ which is a contradiction. This proves lemma \end{proof} 
 Now proposition follows.\end{proof}

Using the fundamental equation (\ref{BM5}) we deduce now that  
\begin{eqnarray*}{1\over k^n}(Div(s_k)-c_n(D^k))\rightarrow 0 \,\, \mbox{ when} \,\, k\rightarrow\infty.
\end{eqnarray*}

 On the other hand it is easy to see using the Chern classes properties of $D^ k$ that (see for example \cite {AU} pp. 991)
 \begin{eqnarray}
 c_r(D)=C^{r}_{n} {\Omega^r\over (2\pi)^r} + O(k^{r-1}),\ \   1\leq r\leq n,
 \end{eqnarray}
 and so 
 \begin{eqnarray}
 \label{classconv}
 {1\over k^n}c_n(D^k)\rightarrow {\Omega^n\over (2\pi)^n}.
 \end{eqnarray}
 This proves the theorem.


{\footnotesize

}

\end{document}